\newcommand{\R}{\mathbb{R}}
\newtheorem{theorem}{Theorem}
\newtheorem{lemma}{Lemma}[section]
\newtheorem{definition}{Definition}[section]
\newtheorem{corollary}{Corollary}[section]
\newtheorem{prop}{Proposition}[section]
\theoremstyle{remark}
\newtheorem{remark}{Remark}[section]
\author[A. Ciomaga]{Adina Ciomaga$^\dag$}
\address{$^\dag$Centre de Math\'ematiques et de Leurs
  Applications, Ecole Normale Sup\'erieure de Cachan, CNRS, UniverSud,
  61 avenue du pr\'esident Wilson, F-94230 Cachan, France}
\email{ciomaga@cmla.ens-cachan.fr}
\author[J.M. Morel]{Jean-Michel Morel$^\ddag$}
\address{$^\ddag$Centre de Math\'ematiques et de Leurs
  Applications, Ecole Normale Sup\'erieure de Cachan, CNRS, UniverSud,
  61 avenue du pr\'esident Wilson, F-94230 Cachan, France}
\email{morel@cmla.ens-cachan.fr}
\title[Level Lines Shortening]
{A proof of equivalence between level lines shortening and curvature motion in image processing}
\begin{document}

\begin{abstract}  
In this paper we define the continuous  Level Lines Shortening evolution of a two-dimensional image as the Curve Shortening operator acting simultaneously and independently on all the level lines of the initial data, and show that it computes  a viscosity solution for the mean curvature motion. This provides an exact analytical framework for its numerical implementation, which runs on line on any image at http://www.ipol.im/. Analogous results hold for its affine variant version, the Level Lines Affine Shortening.
\end{abstract}

\maketitle

{\bf Keywords:} 
partial differential equations, 
mean curvature motion, 
affine curvature motion, 
curve shortening, 
affine shortening, 
level lines, 
topographic maps 
\medskip

{\bf  AMS Subject Classification:} 
35K65, 53A04, 53C21, 65N99, 65D18
\bigskip

% ================================================================================================ 
\bigskip\section{Introduction}
% ================================================================================================ 
\bigskip

In \cite{CMM10},  \cite{CMM11} there were introduced two new image processing algorithms, \emph{Level Lines Shortening} and its affine variant \emph{Level Lines Affine Shortening} simulating a contrast invariant and scale invariant evolution of an image by mean curvature motion, respectively affine curvature motion. The aim of this work is to rigorously justify that the Level Lines Shortening algorithm proposed in these  works computes explicitly a viscosity solution for the \emph{Mean Curvature Motion}
\begin{equation}\label{eq_MCM}
 \frac{\partial u}{\partial t}=|Du|curv(u),
\end{equation}
respectively that the Level Lines Affine Shortening algorithm provides a viscosity solution for  the \emph{Affine Curvature Motion}
\begin{equation}\label{eq_ACM}
 \frac{\partial u}{\partial t}=|Du|\big(curv(u)\big)^{1/3}.
\end{equation}
These equations are particularly interesting since they can be axiomatically obtained from the image multiscale theories, as  the unique partial differential equations satisfying the most desired invariance properties in computer vision. This axiomatic characterization was given by  Alvarez, Guichard, Lions and Morel in \cite{AGLM93}.  Caselles et al. \cite{CaCoMo:96:KP} realized the potential of processing directly the image level lines. They proposed to perform a contrast invariant image analysis directly on the set of level lines, or {\it topographic map}. A fast algorithm computing the topographic map was developed by Monasse and Guichard in \cite{MG98}.

The Level Lines Shortening numerical chain stands for the simultaneous and independent curvature evolution of \emph{all} the level lines for a given function. Curve smoothing by the intrinsic heat equation, also called \emph{Curve Shortening}

\begin{equation}\label{eq_CS}
\frac{\partial x}{\partial t}={\bf k}(x).
\end{equation}
was one of the first versions of curve analysis proposed by Mackworth and Mokhtarian in \cite{MM92}. Here ${\bf k}(x)$ denotes the curvature vector at $x$, defined as the second derivative $x''(s)$ with respect to any length parameter $s$.  By this (nonlinear) evolution a curve instantly becomes smooth, shrinks asymptotically to a circle and develops no singularities or self-crossings. Rigorous proofs were given by Gage and Hamilton for convex Jordan curves \cite{GH86} and later extended to embedded curves by Grayson \cite{Gr87}. The \emph{Affine Shortening} equation

\begin{equation}\label{eq_AS}
\frac{\partial x}{\partial t}= \left( |{\bf k}|^{-2/3} {\bf k}\right)(x)
\end{equation}
is a surprising variant of curve shortening introduced by Sapiro and Tannenbaum in \cite{SaTa:93:AISS},  \cite{SaTa:93:ICE}. Angenent, Sapiro and Tannenbaum \cite{SaTa:94:ACE} gave the existence and uniqueness proofs for affine shortening  and showed a result similar to Grayson's theorem: a shape eventually becomes convex and thereafter evolves towards an ellipse before collapsing. A remarkably fast and geometric algorithm for affine shortening  was given by Moisan in \cite{Mo98}.

On the other hand, Osher-Sethian defined and studied in \cite{OS88} the level set method for the motion of fronts by (mean) curvature, for which  Chen-Giga-Goto  \cite{CGG91} and Evans-Spruck \cite{ES91} provided rigorous justifications. In this setting, the initial curve $\Sigma_0$ is considered as the zero level set of some function $u_0$. The basic result asserts that the zero level set of the evolved function $$\Sigma_t = \{u(\cdot,t)=0\}$$ does not depend on the choice of the initial data and therefore the evolution is purely geometrical. Their arguments are based on the notion of viscosity solution of Crandall and Lions, that allows one to give a suitable meaning to the (MCM) and (ACM) equations, in the class of uniformly continuous functions. We refer to the 'user's guide' of Crandall, Ishii and Lions \cite{CIL92} for further details about viscosity solutions. A mathematical  link  between the median filter and the motion by mean curvature was conjectured by  Merriman, Bence and Osher \cite{MBO92} and later proved by Barles and Georgelin in \cite{BG95} using viscosity methods and by Evans in \cite{Ev93} using a nonlinear semigroup approach.

Evans and Spruck checked in \cite{ES91} the consistency of the level set approach with the classical motion by mean curvature. More precisely, they showed that the mean curvature motion agrees with the classical motion, if and as long as the latter exists. The results applies for a smooth hypersurface, given as the connected boundary of a bounded open set. Their arguments are based on comparison techniques with lower barriers for the approximated mean curvature motion and strongly use the fact that the hypersurface is the zero level set of its (signed) distance function. However, the result does not describe the complete behavior of \emph{all} the level lines of a Lipschitz function. Namely, if we are given a Lipschitz function $u_0$ which evolves by mean curvature in the viscosity sense, are all of its level lines evolving independently by curve shortening? 
For dimension $n\geq 3$ the result is not true, since hypersurfaces can develop singularities and possible change topology. Thanks to Grayson's theorem, the 2D case has a very peculiar structure which we will take advantage of and show that evolving independently and simultaneously by curve shortening \emph{all} the level lines of a function is equivalent to applying directly a mean curvature motion to the functions itself.

The \emph{Level Lines Shortening} builds on the above mentioned contributions and connects explicitly the geometric approach for curve shortening evolutions and the viscosity framework for curvature motions. More precisely, this operator first extracts all the level lines of an image, then it independently and simultaneously smooths {\rm all} of its level lines by curve shortening (CS) (respectively affine shortening (AS)) and eventually reconstructs, at each step, a new image from the evolved level lines. The chains are based on a topological structure, the inclusion tree of level lines as a full and non-redundant representation of an image \cite{CM10}, and on a topological property,  the monotonicity  of curve shortening with respect to inclusion. Therefore, the hierarchy of the level lines is  maintained while performing the smoothing.

We show in this paper that the image reconstructed from the evolved level lines is a viscosity solution of the mean curvature motion (MCM) (resp. affine curvature motion (ACM)). The initial image will be considered as an element of a particular space of functions $\mathcal{VS}(\Omega)$ that we term space of {\rm very simple} functions. This corresponds to bilinearly interpolated images defined on a rectangle $\Omega$ whose topographic maps contain only Jordan curves.
The set of very simple functions arises naturally in image processing, since level lines corresponding to noncritical levels are sufficient to grant an exact reconstruction of the digital image. 

{In this way, the described algorithm corresponds exactly to its numerical implementation \cite{IPOL} and has the advantage of satisfying both numerically and analytically all the invariance properties required by the scale space in question.

The paper is organized as follows. In section \S 2 we define the class of very simple functions as approximations for Lipschitz functions. Section \S 3 is devoted to the definition of the Level Lines Shortening evolution, as an operator acting both on crowns of Jordan curves and flat areas, and on very simple functions. In the last section \S 4 we give the equivalence result. 
The result is then extended to general Lipschitz functions, by first approximating the initial data by very simple functions and then using standard stability properties of viscosity solutions.

%================================================================================
\bigskip\section{Modeling Level Lines Shortening}
%================================================================================

\bigskip\subsection{Crowns of Jordan Curves}

A Jordan curve is a one to one continuous map from the unit circle $S^1$ into $\R^2$.
A Jordan curve $\Sigma$ splits the plane in two connected components. We denote by $Int(\Sigma)$ the open bounded component and by $Ext(\Sigma)$ the open unbounded component.

\begin{definition}[Partial order]
Let $\Sigma^1$ and $\Sigma^2$ be two Jordan curves.
We say that $\Sigma^{1}$ \textbf{surrounds} (strictly) $\Sigma^{2}$ and we write
$\Sigma^{1}\preceq \Sigma^{2}$ ($\Sigma^{1}\prec \Sigma^{2}$)
if $ Int(\Sigma^1)\subseteq Int(\Sigma^2)$ (respectively $\overline{Int(\Sigma^1)}\subset Int(\Sigma^2)$).
\end{definition}
This defines a partial order on the set of planar Jordan curves.

\begin{definition}\label{def::Lip}
We say that a Jordan $\Sigma$ curve is \textbf{piecewise $C^1$}, or in $C_p^{0,1}(S^1)$ if it has finite length
$l(\Sigma)\geq 0$, any length parametrization is piecewise $C^1$  and
if at each discontinuity point for the tangent there are left and right tangent vectors, which are not collinear.
\end{definition}

\begin{definition}\label{def::topoLip}
We say that a sequence of curves $\Sigma^n$ \textbf{converges} in $C_p^{0,1}(S^1)$ to a curve $\Sigma$ if, denoting by $s\in [0, l(\Sigma)]\to x(s)$ a length parameterization of $\Sigma$,  there are Lipschitz parameterizations $s\in[0,l(\Sigma)]\to x^n(s)$ for $\Sigma^n$ such that $x^n(s)$ tends uniformly to $x(s)$, and the left and right unit tangent vectors of $x^n(s)$ tend uniformly to the left and right tangent vectors of $x(s)$. 
\end{definition}

\begin{remark}{\rm
When $l(\Sigma)=0$, these conditions are reduced to the uniform convergence of $x^n(s)$ toward $x(s)$. This convergence can be defined by a family of neighborhoods around each element of $C_p^{0,1}(S^1)$, which therefore is a  topological space.}
\end{remark}

\begin{definition}\label{def::crown}
A \textbf{crown} $\Sigma: (\lambda,\mu)\to C_p^{0,1}(S^1)$ is a continuous and monotone
map $\Sigma$ from the interval $(\lambda, \mu)$ into $ C_p^{0,1}(S^1)$ 
endowed with the partial order $\preceq$.  
If the map is defined on the closed/open interval we talk about \textbf{closed/open crown}. 
\end{definition}

\begin{definition}
When the crown is closed and increasing, $\Sigma(\lambda)=\Sigma^\lambda$ is called the \textbf{interior curve of the  crown} and $\Sigma(\mu)=\Sigma^\mu$ its \textbf{exterior curve of the crown}. If the crown is decreasing, these names exchange. The range of the open/closed crown is denoted by $\Sigma(]\lambda,\mu[)$,  $\Sigma([\lambda,\mu])$, respectively. The crown itself  as an ordered family of curves will also be denoted by  $\Sigma^{[\lambda,\mu]}$
or $(\Sigma^\nu)_{\nu\in(\lambda,\mu)}$.
\end{definition}

\begin{remark}{\rm
A(closed)  crown is a homeomorphism from a closed interval $[\lambda,\mu]$ to $\Sigma[\lambda,\mu]$, since the map $\Sigma$ is continuous, and the interval $[\lambda,\mu]$ is compact.
% Otherwise, it is known \cite{Bou66Topo} that given two linearly ordered sets $X$ and $Y$ endowed with the topologies $\tau_X$ and $\tau_Y$ respectively, then $\phi: X\rightarrow Y$ is a homeomorphism of $X$ onto $\phi(X)$ if and only if $\phi$ is continuous and strictly monotone. Thus, a crown $\Sigma$ as defined above is a homeomorphism from $(\lambda, \mu)$ onto $\Sigma^{(\lambda, \mu)}$. 
}\end{remark}

\begin{definition}
Let $A \subset\Omega$ be a bounded, connected set, whose boundary consists of a finite number of disjoint Jordan curves. We call \textbf{exterior curve of } $A$ the unique Jordan curve $\Sigma^e$ whose interior contains $A$ and \textbf{interior curve(s)} the other Jordan curves $\Sigma^i$, $i\in I$.
\end{definition}

%================================================================================
\subsection{The class of Very Simple Functions}
%================================================================================

A digital image is usually  known by its samples
$\left\{u(i,j)\right\}_{0\leq i\leq M, 0\leq j\leq N}$ on a rectangular grid of $\Omega=[0,M]\times [0,N]$.
We assume that the underlying image $0\leq u(x)\leq 1$ whose samples are the $u(i,j)$ is a Lipschitz function defined on $\Omega$, the continuous image domain. We shall always assume that $u(x)=0$ on the boundary of the domain $\partial \Omega$ and that $u(x)>0$ in the interior of $\Omega$.

The bilinear interpolation in $\Omega$ is the simplest continuous interpolation from the discrete samples $u(i,j)$.  This  interpolate, still denoted $u$, is defined as the unique function coinciding with  the digital image $u$ on the samples
which is bilinear in each dual pixel (the square formed by the centers of four adjacent pixels). This means that $u$ has the form
$$ u(x_1,x_2)=\alpha x_1+\beta x_2+\gamma x_1x_2 +\delta $$
on each square with vertices   $(i, j)$, $(i+1,j)$, $(i, j+1)$, $(i+1, j+1)$. This bilinear interpolation is therefore
positive on the interior of the domain and zero on $\partial \Omega$. The set of bilinear interpolates of  digital images on $\Omega$ will be denoted by $\mathcal{BL}(\Omega).$
The next result is a sane consistency property of the bilinear interpolation.
\begin{prop} \label{approximationuniforme}
If $u$ defined on $\Omega$ is Lipschitz and only known by its samples, its bilinear interpolate converges uniformly to $u$ when the grid mesh tends to zero. 
% In consequence,
% $$ \overline{\mathcal{BL}(\Omega)} = Lip(\Omega)$$
% where the closure is taken with respect to  the uniform convergence.
\end{prop}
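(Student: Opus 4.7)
The plan is a direct Lipschitz estimate, carried out pixel by pixel. Let $L$ denote the Lipschitz constant of $u$ and let $h$ denote the grid mesh. On each dual pixel $Q$ with vertices $P_{00},P_{10},P_{01},P_{11}$, introducing the natural local coordinates $(s,t)\in [0,1]^2$, the bilinear interpolate reads
$$
u_h(x) \;=\; (1-s)(1-t)\,u(P_{00}) + s(1-t)\,u(P_{10}) + (1-s)t\,u(P_{01}) + st\,u(P_{11}).
$$
The key observation is that the four weights $w_{ij}(s,t)$ are nonnegative and sum to $1$, so $u_h(x)$ is a \emph{convex combination} of the four sample values.

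Using the same partition of unity to rewrite $u(x) = \sum_{i,j} w_{ij}(s,t)\,u(x)$, one obtains
$$
u_h(x) - u(x) \;=\; \sum_{i,j\in\{0,1\}} w_{ij}(s,t)\bigl(u(P_{ij}) - u(x)\bigr).
$$
I would then invoke the Lipschitz assumption pointwise: $|u(P_{ij}) - u(x)| \le L\,|P_{ij}-x| \le L h\sqrt{2}$, since the diameter of $Q$ is $h\sqrt{2}$. Combined with $\sum_{i,j} w_{ij}=1$ and $w_{ij}\ge 0$, this yields the bound
$$
|u_h(x) - u(x)| \;\le\; L h\sqrt{2}.
$$
The right-hand side is independent of both the base point $x$ and the particular dual pixel containing it, so the estimate holds uniformly on $\Omega$, and $u_h \to u$ uniformly as $h\to 0$.

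Two small consistency points remain to be recorded. First, although $u_h$ is defined piecewise on each dual pixel, on the common edge of two adjacent pixels the bilinear form degenerates to the one-dimensional linear interpolate determined by the two shared corner values, so $u_h$ is globally continuous on $\Omega$. Second, the standing assumption $u\equiv 0$ on $\partial\Omega$ forces all boundary samples to vanish, so $u_h\equiv 0$ on $\partial\Omega$ as well, and the boundary condition is preserved along the approximation. I do not anticipate any real obstacle: the entire argument reduces to the convex-combination structure of bilinear interpolation together with the Lipschitz property of $u$.
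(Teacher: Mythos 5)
Your proof is correct. The paper states Proposition \ref{approximationuniforme} without any proof, treating it as a standard fact, and your argument --- writing the bilinear interpolate as a convex combination of the four corner samples and invoking the Lipschitz bound $|u(P_{ij})-u(x)|\le L\,\mathrm{diam}(Q)$ --- is precisely the standard estimate that fills this gap, with the added benefit of an explicit uniform rate $\|u_h-u\|_\infty\le \sqrt{2}\,L h$. The two consistency remarks (continuity across pixel edges and preservation of the zero boundary condition) are accurate and harmless; the only point you might acknowledge is that dual pixels (squares of pixel centers) leave a half-mesh band along $\partial\Omega$ where the interpolate must be extended, but this is a convention the paper itself glosses over and it does not affect the estimate.
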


The references  \cite{LMR01}, \cite{LMMM00}, \cite{CM10} show that the bilinear interpolation brings a long list of numerically useful topological properties.

\begin{prop}
\emph{(Properties of level lines of bilinear interpolates)}
\begin{itemize*}
\item[(L1)] For  every level $0\leq \lambda\leq 1$ except for a finite set of levels $\lambda_1,\dots, \lambda_n$ called \textbf{critical}, the iso-level set $\{u=\lambda\}$ is the disjoint union of a finite set of piecewise-$C^1$
Jordan curves, denoted by $(\Sigma^{\lambda, i})_{i\in I_\lambda}$ where $I_\lambda$ is a finite set of indices;
\item[(L2)] The open set $\Omega\setminus u^{-1}(\{\lambda_1,\dots,\lambda_n\}) $ has a finite number of connected components. Each connected component is the range of an open crown $\Sigma^{]\mu, \nu[}$  where $\mu, \nu\in\{\lambda_1,\dots,\lambda_n\}$. As a consequence, $\Omega$ is partitioned in open crowns and in the closed iso-level sets $\{u=\lambda_i\}$ corresponding to the critical levels.
\end{itemize*}
 \end{prop}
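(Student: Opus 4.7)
The plan is to reduce everything to the local structure on each dual pixel and then glue the local pieces globally via a transversality/combinatorial argument.

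\smallskip

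\textbf{Step 1 (local structure and the critical set).} On a dual pixel $Q_{ij}=[i,i+1]\times[j,j+1]$, write $u(x_1,x_2)=\alpha x_1+\beta x_2+\gamma x_1x_2+\delta$, so $\nabla u=(\alpha+\gamma x_2,\,\beta+\gamma x_1)$ vanishes at most at one interior point $p_{ij}$, which (when it exists) is a hyperbolic saddle of $u|_{Q_{ij}}$. Define
\[
\{\lambda_1,\dots,\lambda_n\}\,=\,\{u(i,j)\}_{0\le i\le M,\,0\le j\le N}\ \cup\ \{u(p_{ij})\,:\,p_{ij}\in\mathrm{Int}\,Q_{ij}\}.
\]
This is finite, and it also contains the boundary value $0$ since $u$ vanishes on vertex points of $\partial\Omega$.

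\smallskip

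\textbf{Step 2 (proof of (L1)).} Fix $\lambda\notin\{\lambda_1,\dots,\lambda_n\}$. In each $Q_{ij}$, $\{u=\lambda\}$ is (part of) a rectangular hyperbola (or a segment if $\gamma=0$) on which $\nabla u\neq 0$, hence a smooth 1-manifold by the implicit function theorem. On each edge of $Q_{ij}$, $u$ is affine in one variable, with values at the two endpoints distinct from $\lambda$; the intermediate value theorem gives at most one transversal intersection, interior to the edge. So $\{u=\lambda\}\cap Q_{ij}$ has an even number ($\le 4$) of boundary points on $\partial Q_{ij}$ and consists of at most two disjoint smooth arcs joining them. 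Gluing across shared edges matches endpoints by continuity; the resulting closed set has, at every point, either a $C^1$ branch (in the interior of a pixel) or two non-collinear one-sided tangents (at a pixel edge, because the two pieces lie on different quadrics). Since $\lambda\neq 0$ the curves avoid $\partial\Omega$, each connected component is compact, and by tracking entries/exits pixel-by-pixel no branching or self-intersection occurs. Each component is therefore a piecewise-$C^1$ Jordan curve in the sense of Definition~\ref{def::Lip}, and there are finitely many of them since the number of pixels is finite.

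\smallskip

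\textbf{Step 3 (proof of (L2)).} Let $K=u^{-1}(\{\lambda_1,\dots,\lambda_n\})$. By Step 2 applied to each critical level (together with the saddle and vertex points, which add only finitely many isolated singularities), $K$ is the union of finitely many piecewise-$C^1$ arcs and finitely many points in $\Omega$. Hence $\Omega\setminus K$ has only finitely many connected components. Fix such a component $C$: $u(C)$ is a connected open subset of $\mathbb R$, and since $K$ contains all vertex and saddle values, $u(C)$ is an interval $(\mu,\nu)$ with $\mu,\nu$ consecutive critical levels. On $C$ we have $\nabla u\neq 0$, and the normalized gradient flow $\dot x=\nabla u/|\nabla u|^2$ integrates $u$ to the parameter, giving a foliation of $C$ by smooth level arcs together with a homeomorphism between any two level curves in $C$. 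A clopen argument in $(\mu,\nu)$ then shows that each $\{u=\rho\}\cap C$ consists of exactly one Jordan curve $\Sigma^\rho$ (the set of $\rho$ for which this holds is open by the implicit function theorem applied across pixel edges, and closed because the number of curves can only jump when crossing a critical level). The map $\rho\mapsto\Sigma^\rho$ is monotone for $\preceq$ (the gradient flow points from lower to higher values) and continuous into $C^{0,1}_p(S^1)$ because the hyperbolic arcs and their one-sided tangents depend analytically on $\rho$ inside each pixel.

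\smallskip

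The main obstacle will be the last paragraph: controlling the number of Jordan curves across each connected component of $\Omega\setminus K$ uniformly in $\rho\in(\mu,\nu)$, and verifying the convergence of one-sided tangents at pixel edges in the sense of Definition~\ref{def::topoLip} so that the resulting map $\rho\mapsto\Sigma^\rho$ is genuinely continuous (not merely pointwise) and yields an open crown.
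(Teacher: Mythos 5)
Your proposal is correct and follows essentially the same route as the paper, which only offers a four-line sketch (one critical level per flat or saddle pixel, hence finitely many; at non-critical levels the pixel-wise hyperbola pieces concatenate into disjoint Jordan curves); your Steps 1--2 simply flesh that sketch out, adding the vertex values to the critical set and the edge-transversality argument that rules out branching. Your Step 3 supplies detail for (L2) that the paper's sketch does not even attempt (the gradient-flow/clopen argument for the crown structure), and the residual issues you flag at the end --- uniform control of the number of components and $C^{0,1}_p(S^1)$-continuity of $\rho\mapsto\Sigma^\rho$ --- are likewise left implicit by the paper, so no gap relative to its own standard of proof.
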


\begin{proof}[Sketch of proof]
A dual pixel will contain a critical level either if it is flat, or if it contains a saddle point. In both cases there is only one critical level in the pixel. Since there is a finite number of pixels, there is a finite number of critical levels. At any other level, the restriction of an iso-level set to a given pixel is either empty, or is a single piece of hyperbola. Since the bilinear interpolate is continuous, these pieces of hyperbolae concatenate at each pixel boundary to form one or several disjoint Jordan curves.
\end{proof}

\medskip
\begin{figure}
\centering
\subfigure[]{\includegraphics[width=0.32\columnwidth]{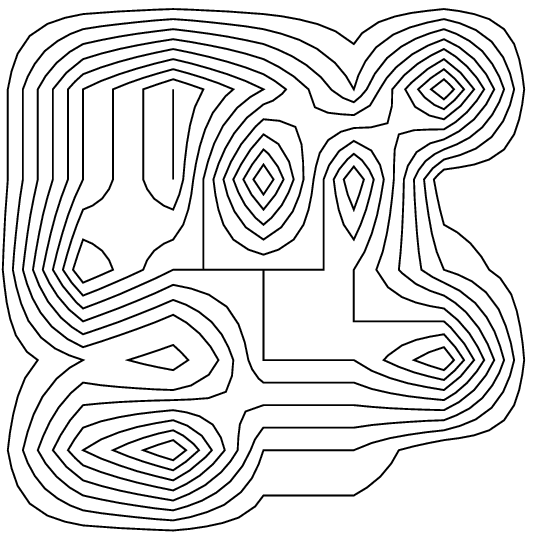}}
\subfigure[]{\includegraphics[width=0.32\columnwidth]{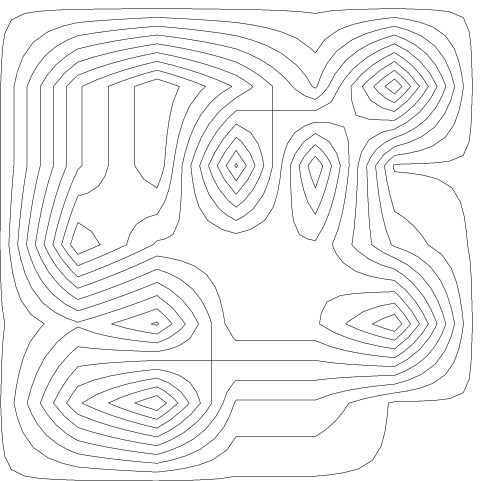}}
\subfigure[]{\includegraphics[width=0.32\columnwidth]{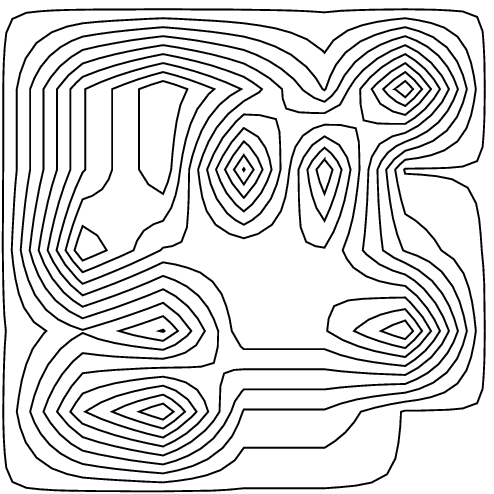}}
\caption{\small Topological structure of the family of level lines, for a bilinear interpolated image.
For  every level $0\leq \lambda\leq 1$, except for a finite set, the iso-level sets $\{u=\lambda\}$ are made of a finite set of piecewise $C^1$ Jordan curves. At critical levels $\lambda_k$, they can have a rather complicated form: they can have T-junctions or reduce to segments (a), contain square pixels (thus being flat areas) or have self-crossings at saddle points (b).
However, by fattening the iso-level sets corresponding to the critical levels, the topology of level sets becomes very simple: only Jordan curves or flat sets (c).\label{fig:pixbil}  }
\end{figure}

The above structure is quite simple, but it does not describe the structure of level lines at the critical levels $\lambda_k$, which can take a rather complicated  form as illustrated in Figure \ref{fig:pixbil}. To define a curvature evolution for these sets would be cumbersome. We shall overcome this drawback, both numerically and analytically, by building a still simpler approximation. This approximation of the image is obtained by fattening all critical iso-level sets into  open sets whose boundary is  a finite set of Jordan level curves. Their curvature evolution will simply be defined by the curvature evolution of their boundary.
Thus, we define a still simpler approximation for Lipschitz functions, deduced from the bilinear interpolation.

\begin{definition}%[Very simple function]
We say that a Lipschitz function $u$ on $\Omega$ is \textbf{ very simple} and we denote by $u\in\mathcal{VS}(\Omega)$ if it satisfies properties $(L1)$, $(L2)$ and
\begin{itemize*}
\item[(L3)] Each crown $\Sigma^{ ]\mu,\nu[}$ can be completed into a closed crown by adding its interior and exterior curves $\Sigma^{\mu}$ and $\Sigma^{\nu}$, which are limits in the $C_p^{0,1}(S^1)$ topology of the level lines of the crown.
\end{itemize*}
\end{definition}\label{defverysimplefunction}

\begin{remark}{\rm 
The construction of the partition goes as follows.
The domain $\Omega$ is the union of all its isolevel sets $\{u=\lambda\}$. 
By the property $(L1)$,  these sets are disjoint unions of (smooth) Jordan curves, except for a finite number. Property $(L2)$ further tell us that these Jordan curves are organized in a finite number of open crowns. At this level, $\Omega$ is the union between a finite number of open crowns and \emph{the} finite number of iso-level sets leftover (which in turn can be split in a finite number of connected components, since we lay on a compact). The third property $(L3)$ tells us that the boundaries of these flat areas, which are not necessarily Jordan curves,  can be shifted to the crowns, such that the crowns become closed.
As a consequence, for very simple functions, $\Omega$ is the disjoint union of the ranges of a finite number of compact crowns, and of a finite number of flat open connected components, each belonging to some critical level $\lambda_k$,
\begin{equation}\label{eq:partition}
\Omega  = \left(\bigcup_{\substack{k \in K}}
\Sigma^{ [\mu_k,\nu_k],k}
\right)\cup\left(\bigcup_{\substack{j\in\{1,\cdots, n\},\\ l\in J_j}} F^{\lambda_j,l}\right)
\end{equation} where $K$ is the finite set of all crown indexes, $\mu_k,\nu_k\in\{\lambda_1,\cdots,\lambda_n\}$ are critical levels, and $J_j$ is the finite set of indexes of  open connected components of the iso-level set of $u$ at the critical level $\lambda_j$.}
\end{remark}

\begin{definition}
We call \textbf{flat regions} of $u_0\in\mathcal{VS}(\Omega)$ the open sets $F^{\lambda_j,l}$, whose boundaries are  unions of a finite number of piecewise-$C^1$ Jordan level curves, and on which the function is constant:
$$ u(x) = \lambda_j, \ \forall x \in F^{\lambda_j, l}, \; l\in J_j. $$
\end{definition}

\begin{lemma} \label{lemma:approxVS}
Every Lipschitz function  can be approximated uniformly by a sequence of very simple functions.
%$$ \overline{\mathcal{VS}(\Omega)} = Lip(\Omega). $$
%where the closure is taken with respect to the uniform convergence. 
\end{lemma}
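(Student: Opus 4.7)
My approach would be a two-step uniform approximation. Given a Lipschitz $u$ on $\Omega$, I would first approximate $u$ in sup-norm by its bilinear interpolate $u_h$ on a grid of mesh $h$, using Proposition \ref{approximationuniforme}. By the proposition on the structure of bilinear level lines, $u_h$ already satisfies (L1) and (L2): a finite set of critical values $\{\lambda_1,\dots,\lambda_n\}$, and outside of them a finite family of open crowns of piecewise-$C^1$ Jordan curves, continuous in $C_p^{0,1}(S^1)$. Hence only (L3) needs to be enforced, and it suffices to exhibit a uniformly small perturbation $\tilde u_h$ of $u_h$ that fattens the critical iso-level sets into flat regions bounded by Jordan curves.

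For the perturbation I would use a one-dimensional trick that avoids any delicate geometric construction in $\Omega$. Pick non-critical values $\nu_k^- < \lambda_k < \nu_k^+$ within distance $\epsilon$ of each critical level, and let $g \colon [0,1] \to [0,1]$ be the nondecreasing Lipschitz map that is constant equal to $\lambda_k$ on every plateau $[\nu_k^-,\nu_k^+]$ and affine between consecutive plateaus. Set $\tilde u_h := g \circ u_h$. Then $\tilde u_h$ is Lipschitz with $\|\tilde u_h - u_h\|_\infty \le 2\epsilon$, and its critical values are still the $\lambda_k$. For any non-critical $\mu$ the level set $\{\tilde u_h = \mu\}$ coincides with the non-critical level set $\{u_h = g^{-1}(\mu)\}$, a disjoint finite union of piecewise-$C^1$ Jordan curves; while $\{\tilde u_h = \lambda_k\} = \{u_h \in [\nu_k^-,\nu_k^+]\}$ is a finite disjoint union of flat bands, each bounded by non-critical level lines of $u_h$. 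Thus (L1) and (L2) are preserved, and after choosing first $h$ then $\epsilon$ small enough the total sup-norm error $\|u - \tilde u_h\|_\infty$ is arbitrarily small.

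For (L3), each open crown of $\tilde u_h$ between two critical levels is, by construction, a reparametrization of a sub-crown of $u_h$ restricted to the non-critical sub-interval $(\nu_k^+, \nu_{k+1}^-)$. As $\mu$ approaches $\lambda_k$ or $\lambda_{k+1}$, the level $g^{-1}(\mu)$ approaches the interior non-critical point $\nu_k^+$ or $\nu_{k+1}^-$ of the original $u_h$-crown, so by continuity the corresponding Jordan curves converge in $C_p^{0,1}(S^1)$ to $\{u_h = \nu_k^\pm\}$, which then serve as the interior and exterior curves completing the crown of $\tilde u_h$. I expect the main obstacle to lie precisely here: verifying that the $C_p^{0,1}(S^1)$-continuity of the bilinear crowns really holds at non-critical levels, \emph{including} uniform convergence of the left and right tangent vectors at the pixel boundaries, as required by Definition \ref{def::topoLip}. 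This should follow from the explicit hyperbolic-arc description of $u_h$ on each dual pixel, which depends $C^1$-smoothly on the level parameter away from saddles and glues continuously across pixel boundaries, and is the one structural input of the bilinear setting that the whole argument rests upon.
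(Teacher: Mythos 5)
Your proposal is correct and is essentially the paper's own argument: the paper likewise reduces to the bilinear interpolate via Proposition \ref{approximationuniforme} and then post-composes with a monotone $1$-Lipschitz function $f_\varepsilon$ that is constant on $[\lambda_k-\varepsilon,\lambda_k+\varepsilon]$ around each critical value, so that the critical iso-level sets fatten into flat regions bounded by non-critical Jordan level lines and the remaining crowns close up, exactly as in your construction with $g$. The only (cosmetic) difference is the normalization of the plateau values, and your closing remark on verifying the $C_p^{0,1}(S^1)$-limits required by (L3) is in fact slightly more explicit than what the paper writes.
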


\begin{proof}
For all $u \in Lip(\Omega)$ and $ \varepsilon>0 $ we build a function
$v\in \mathcal{VS}(\Omega)$ such that
$$\sup_{x\in \Omega}|u(x)-v(x)| <\varepsilon.$$
By Proposition \ref{approximationuniforme}, we can assume that $u\in \mathcal{BL}(\Omega)$. We can order the set of its critical levels  $\lambda_1< \lambda_2 < ... < \lambda_n$. Then define, for $0<\varepsilon<\frac 12\min_j|\lambda_{j+1}-\lambda_j|$, the function  $v(x) = f_\varepsilon(u(x))$, where $f$ is the 1-Lipschitz nondecreasing  function
\begin{equation}
 f_\varepsilon (t) = \left\{
 	\begin{array}{lll}
  		t, & t\leq\lambda_1- \varepsilon & \\
 		 \lambda_k - k\varepsilon & \lambda_k- \varepsilon \leq t\leq \lambda_k + \varepsilon, & k=\overline{1,n}\\
  		t- k\varepsilon & \lambda_k+ \varepsilon \leq t\leq \lambda_{k+1}-\varepsilon, & k=\overline{1,n-1}\\
  		t-n\varepsilon, & t\geq \lambda_n+ \varepsilon &
 	\end{array} \right.
\end{equation}
We have $|f(t)-t|\leq n\varepsilon$, thus $\max_{\Omega}|u-v|\leq n\varepsilon$  which proves the convergence claim.

It remains to check that the function $v\in\mathcal{VS}(\Omega)$. The critical levels of $v$ are inherited from  $u$ and consist of $\{\lambda_k - k \varepsilon\}_{k=1,...,n}$. By construction, all the level lines of $v$ at noncritical levels are level lines of $u$ at noncritical levels and hence Jordan level curves, grouped in crowns. Consider now the flat regions of $v$, which are the open connected components of the  sets
$$\{\lambda_k-\varepsilon< u<\lambda_{k}+\varepsilon\}, \ k=\overline{1,n}.$$
Since $\varepsilon<\frac 12\min_k(\lambda_{k+1}-\lambda_k),$ the boundary of each one of these flat regions is contained in the union of the level sets $\{u=\lambda_k-\varepsilon\}$ and $u=\{\lambda_k+\varepsilon\}$ which are a finite number of Jordan curves. Each one of these Jordan curves belongs to a crown $\Sigma^{ ]\lambda_k,\lambda_{k'}[}$ of $u$ which is truncated into a crown $\Sigma^{]\lambda_k -k\varepsilon,\lambda_{k'}-k'\varepsilon[}$ of $v$.
 \end{proof}

\bigskip\section{Level Lines  Shortening operator}

\medskip\subsection{LLS semigroup operator}
Given a very simple function $u_0\in\mathcal{VS}(\Omega)$, its Level Lines  Shortening evolution consists in evolving independently and simultaneously by Curve  Shortening each of its level lines, denoted by $\Sigma_0^{\lambda, i}$, and eventually reconstructing, for each time $t>0$, a new function $u(\cdot,t)$ whose level lines are the evolutions  $\Sigma_t^{\lambda, i}$, where the subscript $t$ denotes the time $t$. This definition, will have to be proven consistent. Our goal is therefore to prove  the commutative diagram:
$$
\xymatrix{
 u_0(\cdot) \ar@{.>}[d]_{MCM/LLS}
        \ar[rrrr]^{level\space\ lines\space\ extraction}
 &&&&\{\Sigma_{0}^{\lambda,i}\}_{\lambda,i}
        \ar[d]^{CS}\\
 u(\cdot,t)	
&&&& \{\Sigma_{t}^{\lambda,i}\}_{\lambda,i}.
        \ar[llll]_{reconstruction}}
$$
To this end, we shall use several fine properties of curve shortening evolution \cite{Gr87}, \cite{GH86}, which is given
 in terms of a nonlinear geometric partial differential equation
\begin{equation}\label{eq::CS}
\frac{\partial x}{\partial t}(s,t)  = {\bf k}(s,t) \\
\end{equation}
where $x(s,t)$ is a family of smooth Jordan curves parameterized for each $t$ by a length parameter. The vector ${\bf k}$ is the acceleration which is normal to the curve, points towards its concavity, and whose norm is  the inverse of the radius of the osculating circle.

\begin{theorem} The  curve shortening evolution has the following properties:
\begin{enumerate}

\item[($P1$)] For any Jordan $C^{0,1}(S^1)$ curve $\Sigma_0$, there exists a collapsing time $T(\Sigma_0)>0$ such  that the Cauchy problem (\ref{eq::CS}) has a unique solution $\Sigma_t$ in
$$
C^{0,1}\left(S^1\times\left[0,T(\Sigma_0)\right)\right)\cap C^{\infty}\left(S^1\times\left(0,T(\Sigma_0)\right)\right)
$$
which still is a Jordan curve.  We set $\Sigma_t=\emptyset$ for  $t>T(\Sigma_0)$.\medskip

\item[($P2$)] The  map $\Sigma_0 \mapsto \Sigma_t$ is continuous for the $C_p^{0,1}(S^1)$ topology of Definition \ref{def::Lip}.
 For  $t=T(\Sigma_0)$ the curve collapses to a point $x(\Sigma_0)$. \medskip

\item[($P3$)] Before collapsing at time $T(\Sigma_0)$, the curve $\Sigma_t$ - rescaled at constant area equal to $\pi$ -
 converges in the $C_p^{0,1}(S^1)$ topology to the unit circle centered at the collapsing point $x(\Sigma_0)$.\medskip

\item[($P4$)] Inclusion Principle:
\begin{itemize}
 \item if $\Sigma^{1}\preceq\Sigma^{2}$, then $\Sigma_t^{1}\preceq \Sigma_t^{2}$  for all $t>0$.
 \item if $\Sigma^{1}\prec\Sigma^{2}$, then $\Sigma_t^{1}\prec \Sigma_t^{2}$  for all $t>0$.
\end{itemize}\medskip

\item[($P5$)] The min-distance of any two disjoint curves increases with time until one of the curve collapses
$$
dist(\Sigma_s^1,\Sigma_s^2) < dist(\Sigma_t^1,\Sigma_t^2), \forall s\leq t.
$$
\item[($P6$)] Convex curves remain convex and shrink in time: $\Sigma_t\preceq \Sigma_0$.
\end{enumerate}\label{propsofLLSoncurves}
\end{theorem}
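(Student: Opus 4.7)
The plan is to assemble (P1)--(P6) from the classical curve shortening literature, with the real work being the extension from smooth initial data to the $C_p^{0,1}(S^1)$ class of Definition \ref{def::Lip}, which permits a finite set of corners with non-collinear one-sided tangents.

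For a smooth Jordan initial curve, (P1), (P3) and (P6) are exactly the contents of Gage--Hamilton \cite{GH86} and Grayson \cite{Gr87}: the flow exists on a maximal interval $[0,T(\Sigma_0))$, remains embedded and smooth, preserves convexity, and once convex its area-normalized rescaling converges in $C^\infty$ to a round circle centered at the collapse point. The inclusion principle (P4) and its strict version follow from the parabolic maximum (respectively strong maximum) principle for the level set equation $\partial_t u = |Du|\,\mathrm{curv}(u)$, applied to two ordered defining functions of $\Sigma^1$ and $\Sigma^2$; one could equivalently use the geometric comparison theorem of \cite{ES91}. The distance monotonicity (P5) is obtained by applying the maximum principle to the squared distance between the two evolving curves, as in \cite{Gr87,ES91}.

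To handle a general $\Sigma_0\in C_p^{0,1}(S^1)$, the plan is to approximate $\Sigma_0$ simultaneously from inside and from outside by smooth Jordan curves $\Sigma_0^{n,-}\preceq\Sigma_0\preceq\Sigma_0^{n,+}$ converging in the topology of Definition \ref{def::topoLip}. The smooth Grayson flows $\Sigma_t^{n,\pm}$ then sandwich any candidate limit by (P4), and their collapsing times converge to a common value $T(\Sigma_0)$ thanks to the identity $T(\Sigma)=\mathrm{Area}(\mathrm{Int}(\Sigma))/(2\pi)$ combined with the continuity of area under $C_p^{0,1}$ convergence. Interior parabolic estimates yield instantaneous $C^\infty$ smoothing for every $t>0$, giving existence, uniqueness and the regularity stated in (P1). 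The properties (P4), (P5) and (P6) then pass to the limit because they are closed under $C_p^{0,1}$ convergence, and (P3) survives since it is an asymptotic statement once the flow is already smooth.

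The delicate point is (P2), continuous dependence in the $C_p^{0,1}$ topology, and specifically the convergence of one-sided tangent vectors required by Definition \ref{def::topoLip}. At a corner of $\Sigma_0$ the initial curvature is a Dirac-type mass, so one must control the regularized flow uniformly near the corner as $t\downarrow 0$. I would combine the approximation above with a barrier argument: pinch the flow near each corner between two circular arcs tangent to the two one-sided tangent vectors, and compare with a self-similar wedge solution matching the opening angle. The non-collinearity hypothesis in Definition \ref{def::Lip} is crucial here, as it rules out cusps for which no self-similar barrier of the required shape exists. This wedge comparison at corners is the step I expect to be the main obstacle, and it is what justifies choosing $C_p^{0,1}(S^1)$ rather than merely Lipschitz as the ambient space of initial curves.
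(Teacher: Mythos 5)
You should first be aware that the paper does not prove Theorem \ref{propsofLLSoncurves} at all: it is stated as a summary of known properties of curve shortening, with \cite{GH86} and \cite{Gr87} cited immediately before the statement, so your proposal can only be measured against that literature rather than against an argument in the paper. For smooth initial curves your assembly is the standard one and is correct in outline: (P1), (P3) and (P6) are the Gage--Hamilton and Grayson theorems, (P4) is the comparison principle, and (P5) follows from the maximum principle applied to the distance between the two evolving curves, as in \cite{Gr87} or \cite{ES91}.

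Your extension to $C_p^{0,1}(S^1)$ initial data, however, has genuine gaps. First, you propose to approximate $\Sigma_0$ from inside and outside by smooth Jordan curves converging in the topology of Definition \ref{def::topoLip}; this is impossible whenever $\Sigma_0$ has a genuine corner, because a smooth approximant has coincident left and right tangents, which cannot converge uniformly to the two distinct one-sided tangent vectors of $\Sigma_0$ at the corner. The sandwich argument must be run in a weaker topology (Hausdorff convergence of the curves together with convergence of the enclosed areas suffices, in view of the identity $T(\Sigma)=\mathrm{Area}(Int(\Sigma))/(2\pi)$), and the $C_p^{0,1}$ assertions for $t>0$ must then be recovered from the smoothness of the evolved curves. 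Second, uniqueness in (P1) does not follow from interior parabolic estimates plus the sandwich: these produce one solution trapped between the inner and outer barriers, but do not exclude other solutions attaching continuously to the same cornered initial curve; you need either a non-fattening argument for the two-dimensional level-set flow in the framework of \cite{ES91}, \cite{CGG91}, or an argument that every admissible solution is squeezed by the same family of smooth barriers, and your sketch is silent on this point. Finally, (P2) as stated is the continuity of the map $\Sigma_0\mapsto\Sigma_t$ with respect to the initial curve, together with collapse to a point at $t=T(\Sigma_0)$; your discussion treats it mainly as continuity in time at $t=0$ near corners, which is the attachment issue already contained in the regularity claim of (P1), so the actual content of (P2) --- a stability estimate at fixed $t>0$, where the evolved curves are smooth and the wedge barriers play no role --- is not addressed.
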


\begin{theorem}
Properties $(P1)-(P6)$ hold for affine shortening:
\begin{equation}\label{eq::AS}
\frac{\partial x}{\partial t}(s,t)  = |{\bf k}|^{-2/3}{\bf k}(s,t) \\
\end{equation}
except for $(P3)$ which is replaced by
\begin{itemize}
 \item[$(P3)'$] Before collapsing at time $T(\Sigma_0)$, the curve $\Sigma_t$ 
 converges in the $C_p^{0,1}(S^1)$ topology to an ellipse centered at the collapsing point $x(\Sigma_0)$.\medskip
\end{itemize}\label{propsofLLASoncurves}
\end{theorem}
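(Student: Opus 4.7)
The plan is to verify the six properties for affine shortening by combining the existence and asymptotic theory already established in the literature with the maximum-principle arguments already used in the curve shortening case of Theorem \ref{propsofLLSoncurves}. Properties $(P1)$ and $(P3)'$ — short-time existence and uniqueness up to the collapsing time $T(\Sigma_0)$, together with the convergence of the normalized curve to a collapsing ellipse — are the deep results of Sapiro and Tannenbaum \cite{SaTa:93:AISS, SaTa:93:ICE} for convex initial data, extended to arbitrary embedded Jordan curves by Angenent, Sapiro and Tannenbaum \cite{SaTa:94:ACE}, where the affine analogue of Grayson's theorem is proved: an embedded curve first becomes convex in finite time and then shrinks to a point with elliptic limit shape.

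For the remaining, purely geometric properties, I would argue that they are formal consequences of the parabolic nature of the flow and can be imported almost verbatim from the curve shortening proofs, since the only structural feature being used there is that the normal velocity is a monotone odd function of the curvature. The inclusion principle $(P4)$ follows from the strong maximum principle applied to the smooth evolutions given by $(P1)$ for $t>0$, or equivalently from the viscosity comparison principle for equation \eqref{eq_ACM} applied to any pair of continuous functions whose level sets are $\Sigma^1$ and $\Sigma^2$. Monotonicity of the min-distance $(P5)$ is obtained exactly as for curve shortening: at any pair of points $(p_1,p_2)\in\Sigma_t^1\times\Sigma_t^2$ achieving the minimum, both (affine) normal velocities point outward relative to the segment $[p_1,p_2]$, and the function $\varphi(t)=\mathrm{dist}(\Sigma_t^1,\Sigma_t^2)$ therefore satisfies a differential inequality with the correct sign. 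Convexity preservation in $(P6)$ is the classical result of \cite{SaTa:93:AISS}, and the strict inclusion $\Sigma_t\prec\Sigma_0$ then follows from $(P4)$ or directly from the observation that $|\mathbf{k}|^{-2/3}\mathbf{k}$ is a strictly inward normal velocity on a convex curve. Continuity $(P2)$ is deduced by a standard approximation argument: an initial datum in $C_p^{0,1}(S^1)$ is approximated by smooth Jordan curves, whose evolutions are controlled uniformly on any time interval $[0,T]$ strictly before collapse through $(P4)$ and $(P5)$.

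The main obstacle, which is precisely what is handled in \cite{SaTa:94:ACE} and does not need to be redone here, is the degeneracy of the affine shortening operator at inflection points. Writing $\mathbf{k}=\kappa\mathbf{n}$, the velocity reads $|\mathbf{k}|^{-2/3}\mathbf{k}=\mathrm{sgn}(\kappa)|\kappa|^{1/3}\mathbf{n}$, which is only Hölder continuous in $\kappa$ at $\kappa=0$, so standard quasilinear parabolic theory cannot be invoked directly to obtain $(P1)$; one needs the specific analysis of the intrinsic affine heat equation carried out in \cite{SaTa:94:ACE}. Once existence and instantaneous smoothing are granted, however, the remaining ingredients reduce to the strong and weak maximum principles, which are insensitive to the exponent $1/3$ in the curvature term, and the proof of $(P2)$ and $(P4)$–$(P6)$ is then essentially identical bookkeeping to the curve shortening case.
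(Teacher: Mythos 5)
Your proposal is correct and matches the paper's treatment: the paper states Theorem \ref{propsofLLASoncurves} without proof, importing $(P1)$, $(P3)'$ and $(P6)$ from Sapiro--Tannenbaum \cite{SaTa:93:AISS}, \cite{SaTa:93:ICE} and Angenent--Sapiro--Tannenbaum \cite{SaTa:94:ACE}, and treating $(P2)$, $(P4)$, $(P5)$ as standard maximum-principle consequences exactly as you do. Your sketch is in fact more explicit than the paper, and correctly isolates the degeneracy at inflection points as the one genuinely delicate issue handled in \cite{SaTa:94:ACE}.
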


Let  $u_0\in\mathcal{VS}(\Omega)$ be a very simple function and $\{\Sigma_0^{\lambda,i}\}_{\lambda,i\in I_\lambda}$ its level lines. Denote by $\Sigma_t^{\lambda,i}$ the evolution of $\Sigma_0^{\lambda,i}$ at time $t$
$$ \Sigma_0^{\lambda,i} \xrightarrow{CS}\Sigma_t^{\lambda,i}. $$
Our first purpose is to show that the family of smooth Jordan curves $\{\Sigma_t^{\lambda,i}\}_{\lambda,i\in I_\lambda}$ is actually the set of level lines of a very simple image $u(\cdot,t)$.
\medskip

\begin{definition} Let $\Sigma_0^{[\zeta,\mu]}= \left(\Sigma_0^\lambda\right)_{\lambda\in [\zeta,\mu]}$ be a closed crown.
We call \textbf{level lines shortening of the crown} $\Sigma_0^{[\zeta,\mu]}$ the family of curves
$$ LLS(t)\left(\Sigma_0^{[\zeta,\mu]}\right) := (\Sigma_t^\lambda)_{\lambda\in [\zeta,\mu]}. $$
where $\Sigma_t^\lambda$ are the curve (affine) shortening evolutions of $\Sigma_0^\lambda$ for all $\lambda\in [\zeta,\mu]$.
\end{definition}
To fix ideas we refer in the following to increasing crowns. Analogous results hold for decreasing crowns.
\begin{prop}\label{evolutionofcrown}
Consider a closed increasing crown $\Sigma^{[\zeta, \mu]}$. Then the  collapsing time
$T(\lambda) = T(\Sigma^\lambda)$ of the curves of the crown is a continuous increasing function of $\lambda\in[\zeta,\mu].$
The level lines shortening at time $t<T(\Sigma^\mu)$  transforms   $\Sigma^{[\zeta,\mu]}$ into  a closed crown
\begin{equation*}
 LLS(t)\left(\Sigma_0^{[\zeta,\mu]}\right)=\Sigma_t^{[\max(\zeta, T^{-1}(t)),\mu]}.
\end{equation*}
\end{prop}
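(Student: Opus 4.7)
The plan is to prove the two parts in sequence: first that $T(\lambda)$ is strictly monotone and continuous, then that the image under $LLS(t)$ is exactly the indicated closed sub-crown.

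For \textbf{strict monotonicity of $T$}, I would invoke the strict version of the Inclusion Principle $(P4)$. If $\zeta \leq \lambda_1 < \lambda_2 \leq \mu$, the assumption that the crown is increasing gives $\Sigma^{\lambda_1} \prec \Sigma^{\lambda_2}$, so $(P4)$ yields $\Sigma_t^{\lambda_1} \prec \Sigma_t^{\lambda_2}$ as long as both exist. At $t = T(\lambda_1)$, by $(P2)$–$(P3)$, the inner curve has collapsed to a point $x(\lambda_1)$, and strict nesting forces $x(\lambda_1) \in Int(\Sigma_t^{\lambda_2})$, so $\Sigma_t^{\lambda_2}$ is still a genuine Jordan curve; hence $T(\lambda_2) > T(\lambda_1)$.

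For \textbf{continuity of $T$}, one direction is easy. If $\lambda_n \to \lambda$, the crown continuity gives $\Sigma_0^{\lambda_n} \to \Sigma_0^\lambda$ in $C_p^{0,1}(S^1)$; by $(P2)$, for every $t < T(\lambda)$ the curves $\Sigma_t^{\lambda_n}$ converge to the nondegenerate Jordan curve $\Sigma_t^\lambda$, so $\Sigma_t^{\lambda_n} \neq \emptyset$ for large $n$ and $\liminf_n T(\lambda_n) \geq T(\lambda)$. For upper semicontinuity, which I expect to be the main obstacle, I would sandwich near collapse. Fix $\delta>0$ and set $t_\delta = T(\lambda) - \delta$; property $(P3)$ (or $(P3)'$ in the affine case) ensures that $\Sigma_{t_\delta}^\lambda$ lies inside a small ball $B(x(\lambda), r(\delta))$ with $r(\delta) \to 0$ as $\delta \to 0$. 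By $(P2)$, for $\lambda_n$ close to $\lambda$, $\Sigma_{t_\delta}^{\lambda_n}$ lies inside $B(x(\lambda), 2r(\delta))$, and $(P4)$–$(P6)$ force it to collapse before this enclosing convex curve (a circle for CS, an ellipse for AS), whose own collapse time $\tau(2r(\delta))$ tends to $0$ with $\delta$. Hence $\limsup_n T(\lambda_n) \leq t_\delta + \tau(2r(\delta)) \to T(\lambda)$. In the Curve Shortening case a shortcut is available: the identity $|Int(\Sigma_t^\lambda)| = |Int(\Sigma_0^\lambda)| - 2\pi t$ gives $T(\lambda) = |Int(\Sigma_0^\lambda)|/(2\pi)$, and $C_p^{0,1}(S^1)$ convergence of the bounding curves entails uniform convergence of their interiors in measure, hence continuity of $T$; for Affine Shortening no such closed form is available, which is precisely why I resort to the sandwiching argument.

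For the \textbf{identification of the shortened crown}, fix $t < T(\mu)$ and let $\lambda_t = \max(\zeta, T^{-1}(t))$, which is well defined thanks to the strict monotonicity and continuity of $T$. By $(P1)$ combined with monotonicity, $\Sigma_t^\lambda = \emptyset$ for $\lambda \in [\zeta, \lambda_t)$ and $\Sigma_t^\lambda$ is a (possibly degenerate, at $\lambda = \lambda_t$ when $\lambda_t > \zeta$) Jordan curve for $\lambda \in [\lambda_t, \mu]$. Monotonicity in $\lambda$ of the family $\{\Sigma_t^\lambda\}_{\lambda \in [\lambda_t,\mu]}$ is inherited from $(P4)$; continuity in $\lambda$ (in the $C_p^{0,1}(S^1)$ topology) is $(P2)$ composed with the continuity of $\lambda \mapsto \Sigma_0^\lambda$. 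At the lower endpoint the limit curve is the collapsing point $x(\lambda_t)$, which is an admissible limit in $C_p^{0,1}(S^1)$ by the remark following Definition \ref{def::topoLip} together with $(P3)$ (resp.\ $(P3)'$). These two properties are exactly the definition of a closed increasing crown, yielding $LLS(t)(\Sigma_0^{[\zeta,\mu]}) = \Sigma_t^{[\lambda_t,\mu]}$ as claimed.
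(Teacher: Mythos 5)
Your proof is correct and takes essentially the same route as the paper's: both deduce the proposition from properties $(P1)$--$(P6)$ of Theorem \ref{propsofLLSoncurves} together with the definition of a crown, obtaining continuity of $\lambda\mapsto\Sigma_t^\lambda$ by composing the crown map with the flow map $(P1)$--$(P2)$ and monotonicity from the inclusion principle $(P4)$. The only difference is one of detail: where the paper declares the continuity and strict monotonicity of the collapsing time $T(\lambda)$ an immediate consequence, you supply explicit and sound arguments for them (strict nesting with the separation property for strict monotonicity, and semicontinuity plus a small enclosing circle/ellipse barrier, or the area identity $T=|Int(\Sigma_0)|/2\pi$ in the CS case, for continuity).
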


\begin{proof}
Since the composition of two continuous maps is continuous, and the composition of two strictly monotone maps is strictly monotone, this is an immediate consequence of Theorem \ref{propsofLLSoncurves} and of the definition of crowns. By property $(P1)$ level lines shortening preserves space-time continuity, whereas by $(P4)$ preserves strict monotonicity.

\end{proof}

In short, a crown remains a crown by level lines shortening, and is made of all curves of the initial crown which have not collapsed yet.
It is convenient to also  define the evolution of a flat region.
\begin{definition}\label{evolutionofregion}
Let $F_0^{\lambda_j,l}, \; l\in J_j$ be a flat region of $u_0\in\mathcal{VS}(\Omega)$ at level $\lambda_j$, of exterior curve $\Sigma_0^e$ and interior curves $\Sigma_0^m$, $m\in M$.
We call the \textbf{level lines shortening of the flat region $F_0^{\lambda_j,l}, \; l\in J_j$} the set defined by
\begin{equation*}
LLS(t)(F_0^{\lambda_j,l})=F_t^{\lambda_j,l} :=Int(\Sigma_t^e)\cap\left(\bigcap_{m}Ext(\Sigma_t^m)\right), \
\forall t<T(\Sigma^e)
\end{equation*}
where $\Sigma_t^e$ and $\Sigma_t^m$, $m\in M$ are the curve shortening evolutions of $\Sigma_0^e$ and $\Sigma_0^m$, $m\in M$.
\end{definition}

The initial flat region remains a region whose boundary is made of all Jordan curves of its initial boundary which have not collapsed. By the inclusion principle the last curve to disappear is the external boundary. When it collapses, the  region disappears.

\begin{figure}
\centering
\subfigure[Image associated to a very simple function and its corresponding level lines.]{
\includegraphics[width=0.4\columnwidth]{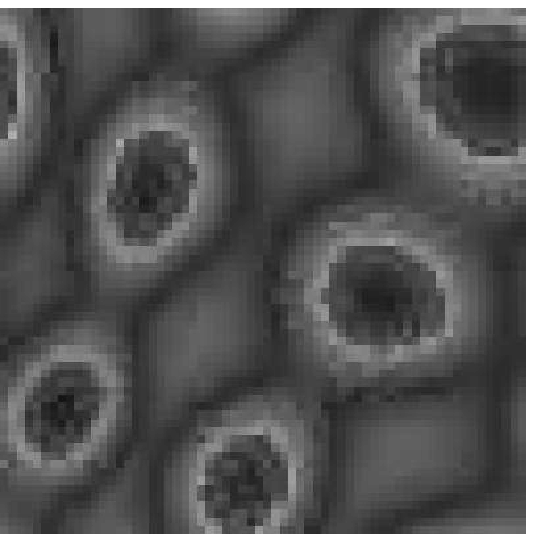}
\includegraphics[width=0.4\columnwidth]{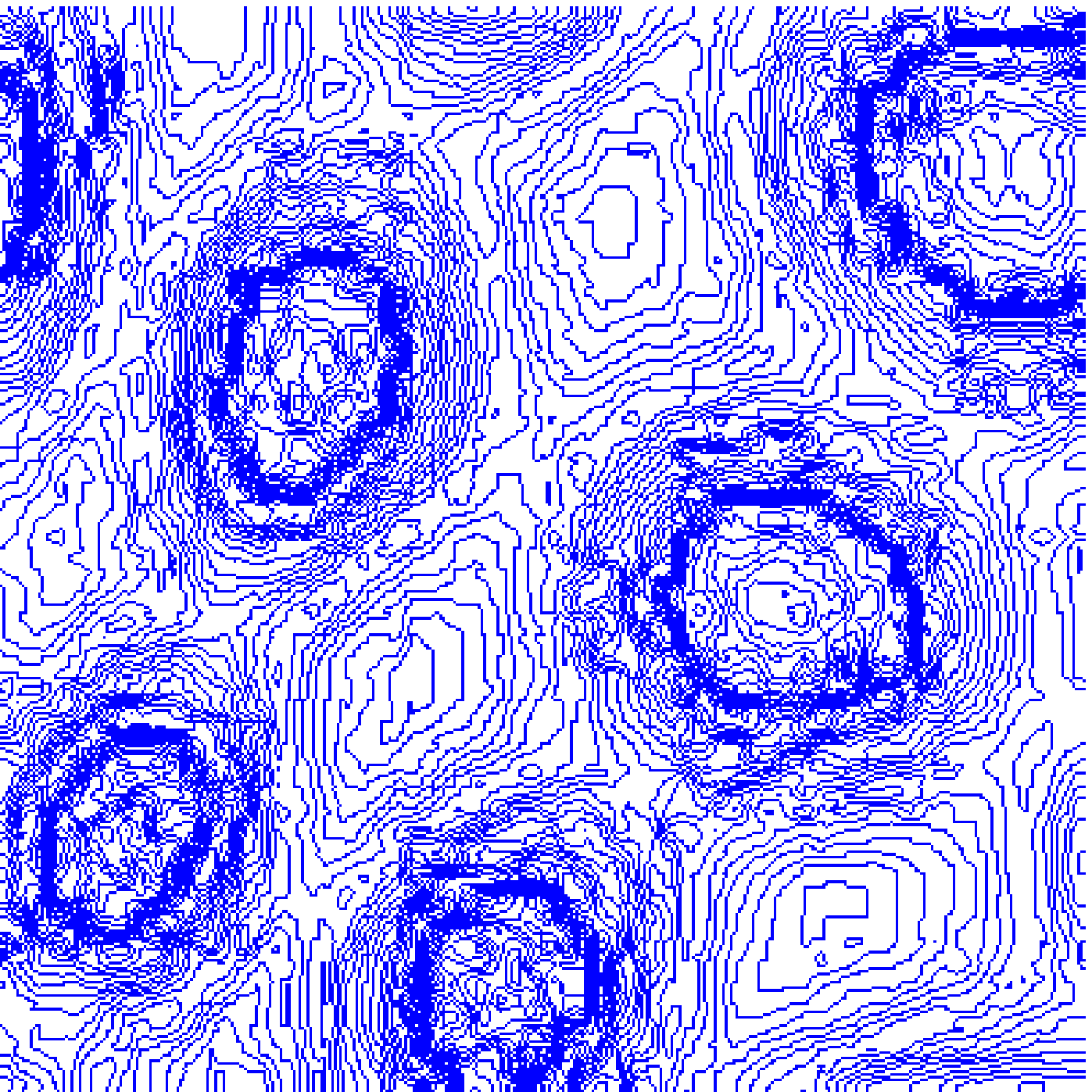}
}
\subfigure[The LLS operator maps very simple functions onto very simple functions.]{
\includegraphics[width=0.4\columnwidth]{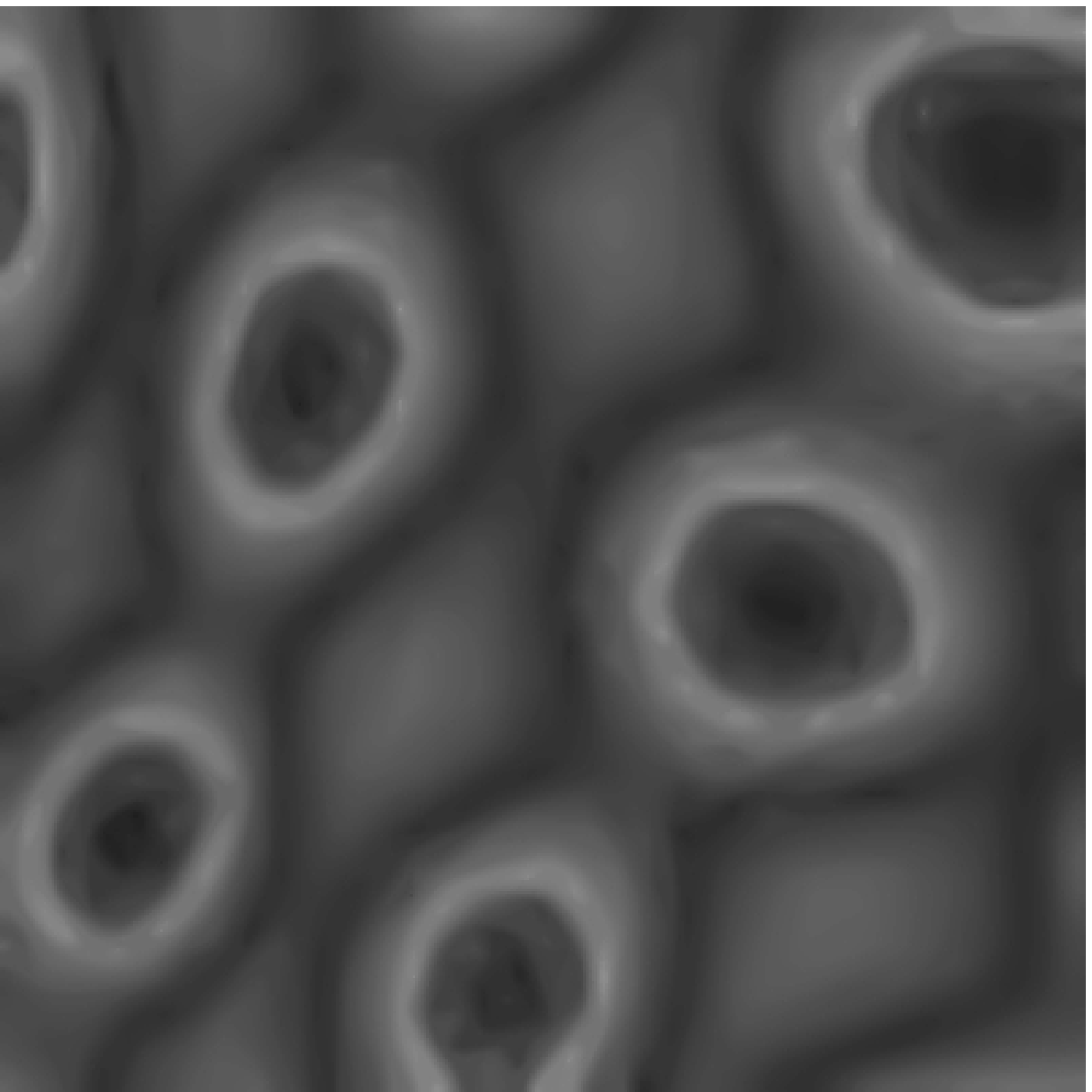}
\includegraphics[width=0.4\columnwidth]{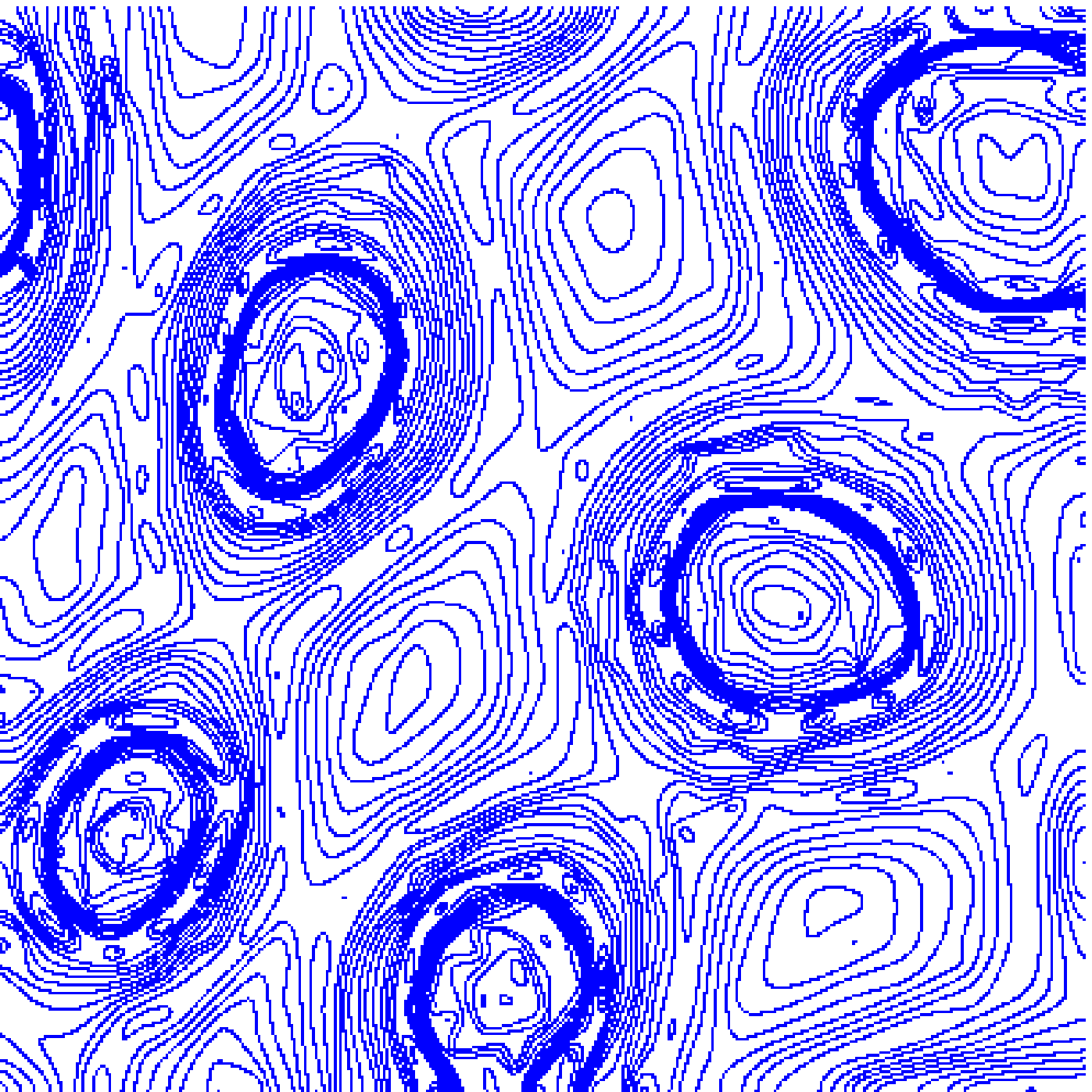}
}
\caption{\small The Level Lines Shortening (LLS) numerical chain: from the original image (top-left),  the family of all its simple level lines is extracted at quantized non-degenerate levels (top-right). Simultaneously and independently, each   level line is evolved (bottom-right)   and the evolved image having these level lines is  reconstructed  (bottom-left). Both the images and their families of level lines satisfy the topological properties $(L1)$, $(L2)$, $(L3)$. Performing the LLS evolution, digitization artifacts due to noise, compression and under-sampling are attenuated.}
\end{figure}

\begin{theorem}[Definition of LLS  for very simple functions]\label{thm::def::LLS}
Let $u_0$ be a very simple Lipschitz function, with critical levels $\{\lambda_k\}_{k=1,\cdots, n}$, Jordan curves $\Sigma_0^{\lambda, i}$ indexed by their level $\lambda$ and $i\in I_\lambda$, and of flat regions $F_0^{\lambda_j, l}$ at critical levels $\lambda_j$, indexed by $l\in J_j$.
The \textbf{level lines shortening evolution of the function $u_0$} is the function $LLS(t)(u_0)=u(\cdot,t)$  defined by
\begin{equation}
 u(x,t) = \left\{
 \begin{array}{ll}
 	 \lambda, & \hbox{ if } x \in \Sigma_t^{\lambda,i}\\
  	\lambda_j, & \hbox{ if } x \in F_t^{\lambda_j, l}\\
  	0,	& \hbox{ if } x\in \Omega\setminus\Omega_t
 \end{array}
 \right.
\end{equation}
where $\Omega_t$ is the domain surrounded by the curve shortening evolution $(\partial \Omega)_t$ of the domain boundary $\partial \Omega_0$ (which is the only zero-level curve of $u_0$).
Then this definition is complete, consistent, the evolved function is a very simple function $u(\cdot,t)\in\mathcal{VS}(\Omega)$ whose Lipschitz constant is smaller than or equal to the initial one.
\end{theorem}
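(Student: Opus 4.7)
The plan is to push the partition (2.1) of $\Omega$ forward under the curve shortening flow and verify the four claims using $(P1)$--$(P6)$. Since $u_0$ is very simple, $\Omega$ is the disjoint union of finitely many closed crowns $\Sigma_0^{[\mu_k,\nu_k],k}$ and finitely many flat regions $F_0^{\lambda_j,l}$. Proposition~\ref{evolutionofcrown} evolves each crown into a (possibly truncated) closed crown, and Definition~\ref{evolutionofregion} evolves each flat region into a set of the same type. The inclusion principle $(P4)$ guarantees that the nesting and disjointness relations among the Jordan curves at time $0$ persist at time $t$, so no point of $\Omega$ belongs to two distinct evolved crowns or flat regions simultaneously, and $u(x,t)$ is therefore unambiguously defined. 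Conversely $(P4)$ also ensures that the connected components of the complement of the evolved skeleton of curves inside $\Omega_t$ are in bijection with the surviving initial pieces, hence every $x\in\Omega_t$ is assigned exactly one value; setting $u(\cdot,t)=0$ outside $\Omega_t$ is consistent since $\partial\Omega$ is the outer zero-level curve of $u_0$.

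Next I would verify that $u(\cdot,t)\in\mathcal{VS}(\Omega)$. Property $(L1)$ is immediate from $(P1)$: for each non-collapsed noncritical level, the evolved level set is a finite disjoint union of smooth Jordan curves. Property $(L2)$ follows from Proposition~\ref{evolutionofcrown}, since the evolved crowns remain open crowns and the critical levels stay in a finite set inherited from $u_0$. Property $(L3)$ is a direct consequence of $(P2)$: the continuity of the map $\Sigma_0\mapsto\Sigma_t$ in the $C_p^{0,1}(S^1)$ topology preserves the existence of $C_p^{0,1}(S^1)$ boundary limits inside each closed crown.

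For the Lipschitz bound, which is the substantive step, set $L:=\mathrm{Lip}(u_0)$ and take $x,y\in\Omega$ with $\alpha:=u(x,t)\neq\beta:=u(y,t)$. If $x$ lies strictly inside a flat region $F_t^{\alpha,l}$, replace it by the first intersection $x'$ of the segment $[x,y]$ with $\partial F_t^{\alpha,l}$: then $u(x',t)=\alpha$ and $|x'-y|\leq|x-y|$. Apply the same reduction to $y$ (or to $(\partial\Omega)_t$ if $y\notin\Omega_t$). We may therefore assume $x\in\Sigma_t^{\alpha,i_\alpha}$ and $y\in\Sigma_t^{\beta,i_\beta}$, two evolutions of disjoint initial level components of $u_0$. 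By $(P5)$,
\begin{equation*}
d\bigl(\Sigma_0^{\alpha,i_\alpha},\Sigma_0^{\beta,i_\beta}\bigr)\leq d\bigl(\Sigma_t^{\alpha,i_\alpha},\Sigma_t^{\beta,i_\beta}\bigr)\leq|x-y|,
\end{equation*}
so there exist $x_0\in\Sigma_0^{\alpha,i_\alpha}$ and $y_0\in\Sigma_0^{\beta,i_\beta}$ with $|x_0-y_0|\leq|x-y|$, and hence $|\alpha-\beta|=|u_0(x_0)-u_0(y_0)|\leq L|x-y|$.

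The main obstacle I anticipate is the bookkeeping at critical events, where a curve collapses or a flat region disappears: one has to verify that the set of critical levels of $u(\cdot,t)$ remains finite and that the newly outermost or innermost curve of a crown is still a $C_p^{0,1}(S^1)$-limit of the surviving curves of that crown. Both points reduce to $(P1)$ (each initial curve has a finite collapsing time) and $(P2)$ (continuity in the right topology), so no new idea is required beyond a careful enumeration of cases.
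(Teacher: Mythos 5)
Your proposal is correct and follows essentially the same route as the paper: push the partition of $\Omega$ into crowns and flat regions forward under the flow, use the inclusion and min-distance properties to see that the pieces stay disjoint and fill $\Omega_t$, and derive the Lipschitz bound from the characterization of $\mathrm{Lip}(u)$ via distances between level sets together with $(P5)$. The only cosmetic differences are that you lean on $(P4)$ where the paper invokes $(P5)$ for disjointness (and $(P6)$ for the shrinking boundary), and that you spell out the verification of $(L1)$--$(L3)$, which the paper leaves implicit.
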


\begin{proof} The initial domain $\Omega$ is partitioned in crowns and flat regions whose boundaries are either interior or exterior curves of crowns, or $\partial \Omega$.   By the min-distance property $(P5)$ in Theorem \ref{propsofLLSoncurves}, when time increases the evolved level curves of $u_0$ fall apart from each other and so do the boundary curves of the flat regions. Thus, by  Proposition \ref{propsofLLSoncurves} and Definition \ref{evolutionofregion} the crowns never meet and the flat regions are at all times the connected components of the complement in $\Omega_t$ of the union of crowns.  In other terms the evolved crowns and evolved flat regions form a partition of $\Omega_t$ given by
\begin{equation}\label{eq:partition_t}
	\Omega_t  =
		\left(\bigcup_{\substack{ k \in K}}\Sigma_t^{[\mu_k,\nu_k],k}
		\right)\cup\left(\bigcup_{\substack{j\in\{1,\cdots, n\},\\l\in J_j}} F_t^{\lambda_j,l}\right)
\end{equation}
On the other hand, the boundary of $\Omega$ is convex and remains convex by curve shortening (property $(P6)$ in Theorem \ref{propsofLLSoncurves}). Hence if points $x$ initially belonging to $\Omega$ have been crossed by the evolving boundary, we have $u(x,t)=0$.
This renders the definition complete and consistent.

Let us show that $u(\cdot,t)$ is Lipschitz.
A function is $L$-Lipschitz if and only if the min-distance between any two of its level sets with levels $\lambda$ and $\mu$ is larger than $\frac{|\lambda-\mu|}{L}$.
More precisely, since the min distance between a flat region and another, or between a flat region and a level line, is always attained on the level lines which bound the flat regions,  we have
$$ Lip(u_0) = \max\{\frac{|\zeta-\mu|}{dist(\Sigma_0^{\zeta},\Sigma_0^{\mu})}\}. $$
Since all min-distances between level lines increase (property $(P5)$, Theorem \ref{propsofLLSoncurves}), we have
$$
\frac{|\zeta-\mu|}{dist(\Sigma_0^{\zeta},\Sigma_0^{\mu})} \geq
\frac{|\zeta-\mu|}{dist(\Sigma_t^{\zeta},\Sigma_t^{\mu})}
$$
and hence the Lipschitz constant of $u(\cdot,t)$ is smaller than or equal to the  Lipschitz constant of $u_0$.
\end{proof}

\begin{definition}
We call   \textbf{Level Lines Shortening operator}, shortly LLS, the above operator acting on the class of very simple functions,
\begin{eqnarray*}
LLS(t) : \mathcal{VS}(\Omega) & \mapsto &\mathcal{VS}(\Omega) \\
	  u_0 &\mapsto & u(\cdot,t).
\end{eqnarray*}
Since the curve shortening itself is a semigroup,  $LLS(t)$ also  is a semigroup, namely $$LLS(t+s)u_0=LLS(t)(LLS(s)u_0).$$

\end{definition}

\begin{corollary}
The \textbf{level lines affine shortening evolution} of a very simple  function 
\begin{equation}
 LLAS(t)(u_0)(x): = \left\{
 \begin{array}{ll}
  \lambda, & \hbox{ if } x \in \Sigma_t^{\lambda,i}\\
  \lambda_j, & \hbox{ if } x \in F_t^{\lambda_j, l}\\
  0,	& \hbox{ if } x\in \Omega\setminus\Omega_t
 \end{array}
 \right.
\end{equation}
where $\Sigma_t^{\lambda,i}$ and $F_t^{\lambda_j, l}$ are the affine shortening evolutions of the initial level lines, respectively flat areas, is well defined and maps $\mathcal{VS}(\Omega)$ onto itself, preserving the semigroup property.
\end{corollary}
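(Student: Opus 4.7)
The plan is to observe that the corollary is, essentially, a transcription of Theorem \ref{thm::def::LLS} with affine shortening substituted for curve shortening throughout. The key point to verify is that the proof of Theorem \ref{thm::def::LLS} relies only on properties $(P1),(P2),(P4),(P5),(P6)$ of Theorem \ref{propsofLLSoncurves}, none of which involve the asymptotic circular shape $(P3)$. By Theorem \ref{propsofLLASoncurves}, properties $(P1),(P2),(P4),(P5),(P6)$ all hold verbatim for affine shortening, so no new geometric input is required.

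First I would reprove the analogue of Proposition \ref{evolutionofcrown}: for a closed increasing crown $\Sigma_0^{[\zeta,\mu]}$, the affine collapsing time $T$ is continuous and strictly increasing in $\lambda$ (via $(P1),(P2),(P4)$), and the LLAS image of the crown at time $t<T(\Sigma_0^\mu)$ is again a closed crown $\Sigma_t^{[\max(\zeta,T^{-1}(t)),\mu]}$; the proof is word for word the same as for LLS. The affine evolution of a flat region $F_0^{\lambda_j,l}$ is then defined, exactly as in Definition \ref{evolutionofregion}, as $Int(\Sigma_t^e)\cap\bigcap_{m\in M}Ext(\Sigma_t^m)$ up to the collapsing time of its exterior curve.

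Next I would assemble the partition argument of Theorem \ref{thm::def::LLS}. The min-distance property $(P5)$ for affine shortening guarantees that distinct crowns remain disjoint and do not meet the evolving flat regions, so the family \eqref{eq:partition_t} still defines a partition of $\Omega_t$. Convexity preservation $(P6)$ applied to the initial convex boundary $\partial\Omega$ ensures that $\Omega_t$ is well-defined and that the extension by $0$ on $\Omega\setminus\Omega_t$ is consistent. The level value is therefore unambiguously attached to each $x\in\Omega$, and the resulting $u(\cdot,t)$ satisfies $(L1),(L2),(L3)$ by construction of its crown and flat-region decomposition, hence belongs to $\mathcal{VS}(\Omega)$. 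The Lipschitz bound $Lip(u(\cdot,t))\le Lip(u_0)$ is obtained exactly as before, by writing the Lipschitz constant as a maximum of ratios $|\zeta-\mu|/\mathrm{dist}(\Sigma^\zeta,\Sigma^\mu)$ and applying $(P5)$ to each such ratio.

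For the semigroup property, I would reduce it to the fact that affine shortening on a single Jordan curve is itself a semigroup — this is contained in $(P1)$, which asserts uniqueness of the Cauchy problem \eqref{eq::AS} and thus yields $\Sigma_{t+s}=(\Sigma_s)_t$ — together with the observation that the extraction of level lines and the reconstruction step are mutually inverse on $\mathcal{VS}(\Omega)$, so they commute with the time-composition. I do not anticipate any genuine obstacle: the only place where one might worry is $(P3)'$ versus $(P3)$, but since the circular-rescaling was never used in Theorem \ref{thm::def::LLS}, replacing it with convergence to an ellipse changes nothing in the argument.
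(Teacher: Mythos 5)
Your proposal is correct and follows the same route as the paper: the paper's proof simply invokes Theorem \ref{propsofLLASoncurves} to transfer all the topological properties $(P1)$--$(P6)$ (with $(P3)$ replaced by $(P3)'$, which is never used) from curve shortening to affine shortening, exactly as you argue. Your write-up merely spells out in detail the verification that the paper leaves implicit.
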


\begin{proof}
This comes immediately from Theorem \ref{propsofLLASoncurves}, which ensures that all the topological properties of curve shortening hold as well for affine shortening.
\end{proof}

\bigskip
\medskip\subsection{Properties of Level Lines Shortening}

\begin{lemma}\label{lemma::radial}
Let $\phi_0$ be a radial increasing function centered at $x_0$, i.e. $\phi_0(x)=\varphi(|x|)$ with $\varphi$ increasing. Then its level lines shortening evolution is given by
$$ \phi(x,t) = \varphi\left(\sqrt{|x|^2+2(t-t_0)}\right) $$
and its affine shortening evolution is given by
$$ \phi(x,t) = \varphi\left(\Big(|x|^{\frac43}+\frac43(t-t_0)\Big)^{3/4}\right). $$
\end{lemma}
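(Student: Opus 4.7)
The plan is to reduce everything to the explicit shrinking-circle solution of (CS) and (AS), exploiting that the level lines of $\phi_0$ are concentric circles. First I would observe that since $\varphi$ is increasing, for each level $\lambda$ in the range of $\varphi$ the level set $\{\phi_0=\lambda\}$ is exactly the circle $C_{r_0}$ of radius $r_0=\varphi^{-1}(\lambda)$ centered at $x_0$, and the crowns of $\phi_0$ are the annuli between such circles. By rotational symmetry of the equations, each such circle stays a circle centered at $x_0$ under either evolution, so it suffices to track its radius.

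For (CS), the curvature vector on a circle of radius $r$ has magnitude $1/r$ and points inward, hence $\tfrac{dr}{dt}=-\tfrac{1}{r}$; integrating with $r(t_0)=r_0$ gives $r(t)^2=r_0^2-2(t-t_0)$, valid until collapse at $t=t_0+r_0^2/2$. For (AS) the normal velocity is $|{\bf k}|^{-2/3}|{\bf k}|=|{\bf k}|^{1/3}=r^{-1/3}$ inward, hence $\tfrac{dr}{dt}=-r^{-1/3}$, which integrates to $r(t)^{4/3}=r_0^{4/3}-\tfrac{4}{3}(t-t_0)$.

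With the evolution of each level line in hand, I would reconstruct $\phi(\cdot,t)$ by the very definition of LLS: for a point $x$ in the non-collapsed region, $\phi(x,t)=\lambda$ iff $x\in\Sigma_t^\lambda$. In the (CS) case this reads $|x|^2=r_0^2-2(t-t_0)$, so inverting, $r_0=\sqrt{|x|^2+2(t-t_0)}$ and $\phi(x,t)=\varphi(r_0)$, which is the claimed formula. The (AS) case is identical with exponent $4/3$ in place of $2$, yielding $r_0=\bigl(|x|^{4/3}+\tfrac{4}{3}(t-t_0)\bigr)^{3/4}$.

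The only subtle point is that $\phi_0$ need not lie in $\mathcal{VS}(\Omega)$: it has a continuum of level lines, none of them critical, so strictly speaking the operator of Theorem \ref{thm::def::LLS} is not directly defined on $\phi_0$. I would handle this by approximation: pick finite increasing collections $\lambda_1^n<\cdots<\lambda_{k_n}^n$ with mesh tending to zero in the range of $\varphi$, use Lemma \ref{lemma:approxVS} to build very simple approximations $\phi_0^n\to\phi_0$ uniformly whose only level lines are exactly the concentric circles $C_{\varphi^{-1}(\lambda_i^n)}$ separated by flat annuli, apply $LLS(t)$ (resp. $LLAS(t)$) to each $\phi_0^n$, and pass to the limit using property (P2), i.e. continuity of curve shortening in the $C_p^{0,1}(S^1)$ topology, which guarantees that the evolved level circles and flat annuli converge to the shrinking circles and the function values converge to the claimed profile. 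The main obstacle is precisely this last approximation step; the ODE computations on the radius are routine, but verifying that the limit of the reconstructed $LLS(t)\phi_0^n$ coincides with the closed-form expression requires the uniform-in-$\lambda$ continuity of $r_0\mapsto r(t;r_0)$, which is built into (P2).
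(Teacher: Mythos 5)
The paper states this lemma without any proof, and your argument --- concentric circular level lines evolve as shrinking circles governed by $\dot r=-1/r$ (resp.\ $\dot r=-r^{-1/3}$), then invert $r(t;r_0)$ to reconstruct the function --- is exactly the intended computation, and your ODE integrations and the resulting formulas are correct. Your final approximation step through $\mathcal{VS}(\Omega)$ is more caution than the paper itself exercises (the radial barrier in the space--time continuity proof is likewise not a very simple function): since every level line of $\phi_0$ is a Jordan curve evolving explicitly, the level-lines-shortening evolution is directly meaningful here, but the extra step does no harm.
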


\begin{lemma} [Local comparison] \label{lemma::localcomp}
Let $u_0$  be a very simple function, $\phi_0$ a radial increasing function centered at $x_0$, and denote by $u(\cdot,t)$ and $\phi(\cdot,t)$ their LLS/LLAS evolutions. If in a local neighborhood $\mathcal{N}(x_0)$ of $x_0$ the following holds
$$
u_0(x)\leq \phi_0(x), \forall x\in \mathcal{N}(x_0)
$$
then there exists a short time $t_0>0$ such that
$$
u(x_0,t) \leq \varphi(x_0,t), \forall 0<t<t_0.
$$\end{lemma}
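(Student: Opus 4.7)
The plan is to fix any noncritical (for $u_0$) level $\lambda > \phi(x_0,t)$ and show $u(x_0,t) < \lambda$; letting $\lambda \downarrow \phi(x_0,t)$ through noncritical values (possible since $u_0$ has only finitely many critical levels) then yields $u(x_0,t) \leq \phi(x_0,t)$. By Lemma \ref{lemma::radial}, the level lines of $\phi_0$ at level $\mu > \varphi(0)$ are concentric circles $C_0^\mu$ of radius $r_\mu = \varphi^{-1}(\mu)$ around $x_0$, and by radial symmetry $C_t^\mu$ is a concentric circle of positive radius exactly as long as $\mu > \phi(x_0,t)$. I would take $R>0$ with $B(x_0,R)\subset \mathcal{N}(x_0)$ and set $t_0$ to be the collapse time of $C_0^{\varphi(R)}$, so that all the auxiliary circles used below live inside $\mathcal{N}(x_0)$ throughout $[0,t_0)$.

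Given such $\lambda < \varphi(R)$, I introduce an auxiliary level $\lambda' \in (\phi(x_0,t),\lambda)$, so that $C_t^{\lambda'}$ is still a concentric circle of positive radius around $x_0$ contained in $\mathcal{N}(x_0)$. The hypothesis-driven estimate is that on the closed disk $D' := \overline{Int(C_0^{\lambda'})}$ one has $u_0 \leq \phi_0 \leq \lambda' < \lambda$, so every Jordan level curve $\Sigma_0^{\mu,i}$ of $u_0$ at any level $\mu \geq \lambda$ is disjoint from $D'$, as is every flat region of $u_0$ at level $\geq \lambda$ (a subset of $\{u_0 \geq \lambda\}$).

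The core geometric step is to transfer this disjointness to time $t$ via the inclusion principle $(P4)$ and the monotone distance property $(P5)$. Each Jordan curve $\Sigma_0^{\mu,i}$ with $\mu \geq \lambda$ is disjoint from $C_0^{\lambda'}$ and cannot sit strictly inside it (since $D' \supset Int(C_0^{\lambda'})$ and $\Sigma_0^{\mu,i}\cap D' = \emptyset$), so one of two mutually exclusive configurations holds: either $(B)$ $C_0^{\lambda'}\prec \Sigma_0^{\mu,i}$, or $(C)$ the two curves have disjoint interiors. By $(P4)$, $(B)$ propagates into $\overline{Int(C_t^{\lambda'})}\subset Int(\Sigma_t^{\mu,i})$; by $(P4)$--$(P5)$, $(C)$ propagates into $\Sigma_t^{\mu,i}\subset Ext(C_t^{\lambda'})$. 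Either way $\Sigma_t^{\mu,i}\cap \overline{Int(C_t^{\lambda'})} = \emptyset$. For a flat region $F_0^{\lambda_j,l}$ with $\lambda_j \geq \lambda$, the same dichotomy applied to its exterior curve $\Sigma_0^e$ covers case $(C)$ (the whole evolved region stays outside $C_t^{\lambda'}$); in case $(B)$, the connected set $Int(C_0^{\lambda'})$, being disjoint from every boundary curve of $F_0^{\lambda_j,l}$ and not contained in $F_0^{\lambda_j,l}$ itself (where $u_0 = \lambda_j > \lambda$), must lie strictly inside some interior boundary $\Sigma_0^m$ of the region, and $(P4)$ propagates this into $\overline{Int(C_t^{\lambda'})} \subset Int(\Sigma_t^m)$, which is disjoint from $F_t^{\lambda_j,l}$.

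Combining these two facts with the piecewise definition in Theorem \ref{thm::def::LLS}, the closed disk $\overline{Int(C_t^{\lambda'})}$ meets no level line and no flat region of $u(\cdot,t)$ at level $\geq \lambda$, and trivially $u(\cdot,t)=0<\lambda$ on $\Omega\setminus\Omega_t$; hence $u(\cdot,t)<\lambda$ throughout this disk and in particular $u(x_0,t)<\lambda$. The LLAS variant is handled identically using Theorem \ref{propsofLLASoncurves} and the corresponding radial formula. The step I expect to require the most care is the flat region argument, since a flat region may have its exterior boundary surrounding $x_0$, so one cannot naively claim the region is disjoint from $x_0$ at the initial time; the remedy — captured by introducing $\lambda'$ strictly between $\phi(x_0,t)$ and $\lambda$ — is to obtain \emph{strict} inclusion $C_0^{\lambda'} \prec \Sigma_0^m$ of the small disk inside an interior boundary of the flat region, a strict inclusion $(P4)$ can then propagate without pinching.
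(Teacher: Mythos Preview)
Your proof is correct and follows essentially the same approach as the paper's: both arguments translate the local inequality $u_0\leq\phi_0$ into a geometric separation between the level curves of $u_0$ at levels $\geq\lambda$ and the circular $\lambda$-level line of $\phi_0$, and then propagate that separation in time via the inclusion principle $(P4)$ and the min-distance property $(P5)$. Your version is more explicit---introducing the auxiliary level $\lambda'$ to force strict inclusions and carefully treating the flat-region dichotomy---whereas the paper compresses this into the single observation that the circle $\Sigma^{\lambda,\phi_0}$ ``surrounds no point belonging to a level set of the same level of $u_0$'' and that this is preserved by $(P4)$--$(P5)$; conceptually the two arguments coincide.
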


\begin{proof} 

We argue for LLS, the case LLAS being analogue.
The level set $\{\phi_0< \lambda\}$ is the open disk with radius $\phi_0^{-1}(\lambda)$ and satisfies the inclusion
$$
\{ x\in \mathcal{N}(x_0); \phi_0(x)< \lambda\}  \subset
\{ x\in \mathcal{N}(x_0);  u_0(x)< \lambda\}
$$
Then every level line $\Sigma^{\lambda,\phi_0}$ of $\phi_0$ surrounds no point belonging to a level set of the same level of $u_0$. By the inclusion principle and the topological structure of the level lines for a very simple image, this property is preserved for all $t\leq t_0$ where $t_0$ is the vanishing time for the largest level line of $\phi$ in the neighborhood $\mathcal{N}(x_0)$. Indeed if the level set of $u(x,t)$  is a flat part bounded by Jordan curves, by the min-distance property $(P5)$ these Jordan curves never cross in their evolution the circles corresponding to the level set for  $\phi(x,t)$. If the level set of $u(x,t_0)$ is a finite set of level lines, in the same way the evolved level lines of $u(x,t)$ never cross the circular level line of $\phi(x,t)$.

On the other hand $x_0$ belongs to all the level lines of $\phi(\cdot,t)$, for all times $t<t_0$. Hence the value of $u(\cdot,t)$ at $x_0$ must necessarily be less than the minimum level of $\phi(\cdot,t)$, which is attained exactly at $x_0$. Consequently
$$ u(x_0,t) \leq \phi(x_0,t), \forall t<t_0. $$
\end{proof}

\begin{prop}[Space-Time Continuity]
Let $u_0\in\mathcal{VS}(\Omega)$ be $L$-Lipschitz continuous and consider its level lines (affine) shortening evolution $u(\cdot,t)= LL(A)S(t)(u_0)$, for all $t\in(0,\infty)$. Then $u\in C^0(\Omega\times[0,\infty))$.
\end{prop}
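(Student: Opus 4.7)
The strategy is to reduce joint continuity on $\Omega\times[0,\infty)$ to two separate one-variable estimates: a uniform spatial Lipschitz bound, which is already given by Theorem \ref{thm::def::LLS} ($u(\cdot,t)$ is $L$-Lipschitz for every $t$), and a time modulus $\omega(s)\to 0$ as $s\to 0$ that is independent of the base point. Once both are in hand, the triangle inequality
$$|u(x,t)-u(x_0,t_0)|\leq |u(x,t)-u(x_0,t)|+|u(x_0,t)-u(x_0,t_0)|\leq L|x-x_0|+\omega(|t-t_0|)$$
yields continuity of $u$ on $\Omega\times[0,\infty)$.

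For the time modulus, I would first treat the case $t_0=0$ at a fixed point $x_0\in\Omega$ with $\lambda_0:=u_0(x_0)$. The $L$-Lipschitz estimate gives the global sandwich
$$\lambda_0-L|x-x_0|\leq u_0(x)\leq \lambda_0+L|x-x_0|.$$
The upper barrier $\phi_0^{+}(x)=\lambda_0+L|x-x_0|$ is a radial increasing function with profile $\varphi^{+}(r)=\lambda_0+Lr$; Lemma \ref{lemma::radial} gives $\phi^{+}(x_0,t)=\varphi^{+}(\sqrt{2t})=\lambda_0+L\sqrt{2t}$, and Lemma \ref{lemma::localcomp} then yields $u(x_0,t)\leq\lambda_0+L\sqrt{2t}$ for small $t>0$. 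The lower barrier $\phi_0^{-}(x)=\lambda_0-L|x-x_0|$ is radial decreasing, and the analogous computation, tracking the concentric circles shrinking by curve shortening, gives $\phi^{-}(x_0,t)=\lambda_0-L\sqrt{2t}$. Thus $|u(x_0,t)-\lambda_0|\leq L\sqrt{2t}$ for small $t$, which is the desired modulus at $x_0$, and it is manifestly independent of $x_0$. For the affine case the same scheme with Lemma \ref{lemma::radial} yields the modulus $\omega(t)=L\bigl(\tfrac{4}{3}t\bigr)^{3/4}$, which still vanishes as $t\to 0$.

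To promote this to arbitrary $t_0>0$, I would invoke the semigroup identity $LLS(t_0+s)u_0=LLS(s)\bigl(LLS(t_0)u_0\bigr)$ together with the fact, noted in Theorem \ref{thm::def::LLS}, that $v_0:=LLS(t_0)u_0\in\mathcal{VS}(\Omega)$ remains $L$-Lipschitz; applying the $t_0=0$ estimate to $v_0$ controls $|u(x_0,t_0+s)-u(x_0,t_0)|$ for $s>0$, and the corresponding bound for $s<0$ follows by applying the forward estimate to the initial datum $u(\cdot,t_0+s)$. The one delicate point, which I expect to be the main obstacle, is justifying the lower barrier step: Lemma \ref{lemma::localcomp} is stated only for radial \emph{increasing} $\phi_0$, so one must either formulate and prove the symmetric statement for radial decreasing functions (the proof is identical modulo swapping ``surrounds'' with ``is surrounded by'' and invoking the inclusion principle $(P4)$ in the reverse direction), or reduce to the stated lemma via the contrast-reversal symmetry of LLS (which acts on level sets only, so $LLS(t)(-u_0)=-LLS(t)(u_0)$). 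Either route requires checking carefully that the comparison still goes through when $x_0$ lies on the boundary of a flat region or on a level line at a critical level, but both cases are handled by the min-distance property $(P5)$ exactly as in the existing proof of Lemma \ref{lemma::localcomp}.
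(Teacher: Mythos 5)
Your proof is correct and follows essentially the same route as the paper: the same space/time triangle-inequality split, the spatial $L$-Lipschitz bound from Theorem \ref{thm::def::LLS}, and the cone barriers $u(x_0,t_0)\pm L|x-x_0|$ evolved via Lemma \ref{lemma::radial} and compared via Lemma \ref{lemma::localcomp}. The points you flag as delicate (the radial \emph{decreasing} lower barrier, which Lemma \ref{lemma::localcomp} does not literally cover, and the backward-in-time estimate) are in fact elided in the paper's own proof, and your proposed fixes are the right ones.
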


\begin{proof}
We want to find a Lipschitz type estimate for the function $u$ and we argue separately in space and time:
$$
|u(x,t) - u(x_0,t_0)| \leq |u(x,t) - u(x_0,t)| + |u(x_0,t) - u(x_0,t_0)|.
$$
By Theorem \ref{thm::def::LLS} the LLS  evolution $u(\cdot,t)$ at any time $t>0$ of the initial function $u_0$ remains $L-Lipschitz$ continuous and hence
$$
|u(x,t) - u(x_0,t)|\leq L |x-x_0|.
$$
The time-continuity follows from comparisons with shrinking cones.
More precisely, the Lipschitz continuity at time $t_0$ tells us there exists $\phi(\cdot,t_0)$ a $L$-Lipschitz radial upper barrier
$u(x,t_0) \leq \phi(x,t_0)$, touching $u$ at point $x_0$,given by
$$
\phi(x,t_0) = u(x_0,t_0)+L|x-x_0|.
$$
By Lemma \ref{lemma::radial}, its LLS evolution is
$$
\phi(x,t)=u(x_0,t_0) + L\sqrt{|x-x_0|^2+2(t-t_0)}.
$$
It follows from the local comparison with radial functions given in Lemma \ref{lemma::localcomp} that
$$
u(x_0,t)\leq \phi(x_0,t) = u(x_0,t_0) +  L\sqrt{2(t-t_0)}.
$$
Consequently
$$
|u(x,t) - u(x_0,t_0)| \leq L\left(|x-x_0| + \sqrt{2(t-t_0)}\right).
$$
This proves that $u(x,t)$ is  uniformly continuous in $t$ and $x$. Similar results hold for LLAS.
\end{proof}

The structure of  very simple functions implies that there is only a finite number of possible \textbf{collapsing times}, namely times $t_*^k>0$ where some crown collapses to a point.  At these times $t_*^1<t_*^2 \dots < t_*^m$ there is one (or several) collapse pairs $(t_*^k, \Sigma^{\lambda_k,i_k})$. The next lemma gives a stability property for flat regions.

\begin{lemma}[Flatness]\label{lemma::flatness}
Let $u_0$ be a very simple function and $u(\cdot,t)$ its LL(A)S evolution at time $t$.
Let $x_0$ be a point in a flat region of $u(\cdot,t_0)$ and suppose that it is not a collapsing point.
Then there exists $\delta_0>0$  such that $x_0$ stays in a flat region  of $u(\cdot, t)$, for all $|t-t_0|<\delta_0$.
\end{lemma}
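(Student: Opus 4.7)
The plan is to identify the finite list of Jordan curves bounding the flat region containing $x_0$ at time $t_0$ and to appeal to the time-continuity of curve shortening on each of them, together with the strict inclusions built into Definition \ref{evolutionofregion}. Writing $x_0\in F_{t_0}^{\lambda_j,l}$, we have
\[
F_{t_0}^{\lambda_j,l}=Int(\Sigma_{t_0}^{e})\cap\bigcap_{m\in M}Ext(\Sigma_{t_0}^{m}),
\]
where the exterior curve $\Sigma_0^{e}$ and the (finitely many) interior curves $\{\Sigma_0^{m}\}_{m\in M}$ are fixed once and for all by the initial very simple function $u_0$. Since $t_0$ is not a collapsing time, $t_0<T(\Sigma_0^{e})$ and $t_0<T(\Sigma_0^{m})$ for every $m\in M$, so each of the maps $t\mapsto\Sigma_t^{e}$, $t\mapsto\Sigma_t^{m}$ is well defined on an open interval around $t_0$.

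Next I would exploit the openness of the flat region by setting
\[
3r:=dist\!\left(x_0,\ \Sigma_{t_0}^{e}\cup\bigcup_{m\in M}\Sigma_{t_0}^{m}\right)>0.
\]
Properties $(P1)$--$(P2)$ of Theorem \ref{propsofLLSoncurves} provide continuity of each map $t\mapsto\Sigma_t^{e}$ and $t\mapsto\Sigma_t^{m}$ in the $C_p^{0,1}(S^1)$ topology of Definition \ref{def::topoLip}, which in particular entails uniform convergence of length parameterizations. Hence there exists $\delta_0>0$ such that, for every $|t-t_0|<\delta_0$, the curve $\Sigma_t^{e}$ lies in the $r$-tubular neighborhood of $\Sigma_{t_0}^{e}$ and each $\Sigma_t^{m}$ in the $r$-tubular neighborhood of $\Sigma_{t_0}^{m}$. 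In particular none of these curves meets $B(x_0,2r)$. Since $x_0$ is strictly inside $\Sigma_{t_0}^{e}$ and strictly outside every $\Sigma_{t_0}^{m}$, and since a small tubular perturbation of a Jordan curve avoiding $x_0$ cannot change which connected component of its complement contains $x_0$, these inside/outside relations pass to all nearby times, giving
\[
x_0\in Int(\Sigma_t^{e})\cap\bigcap_{m\in M}Ext(\Sigma_t^{m})=F_t^{\lambda_j,l},\qquad |t-t_0|<\delta_0.
\]

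The main subtlety is to ensure that the flat region through $x_0$ at nearby times is still described by the same unchanging list $\Sigma^{e},\{\Sigma^{m}\}_{m\in M}$; this is exactly why the hypothesis excludes collapsing times. By Proposition \ref{evolutionofcrown} the only times at which the combinatorial structure of the partition \eqref{eq:partition_t} of $\Omega_t$ can change are the finitely many collapsing times $t_*^1<\dots<t_*^m$, so after shrinking $\delta_0$ if necessary so that $(t_0-\delta_0,t_0+\delta_0)$ avoids each $t_*^k$, Definition \ref{evolutionofregion} applies unchanged throughout and the conclusion follows. The same argument adapts verbatim to the LLAS case, using Theorem \ref{propsofLLASoncurves} in place of Theorem \ref{propsofLLSoncurves}.
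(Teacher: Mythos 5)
Your proof is essentially sound but takes a genuinely different route from the paper's, and it contains a few slips about collapsing times that need repair. The paper never tracks the fixed boundary curves of the flat region: for $t>t_0$ it places a circle $\partial B(x_0,r_0)$ inside the flat region, evolves it by curve shortening, and uses the inclusion principle to conclude that no level line of \emph{any} level can enter $B(x_0,r(t))$; for $t<t_0$, where the inclusion principle gives nothing, it argues by contradiction and compactness: if level lines $\Sigma_{t_j}^{\lambda_j}$ at possibly varying levels accumulated at $x_0$ as $t_j\nearrow t_0$, uniform curvature bounds (valid away from collapsing points) would let one extract a convergent subsequence, and by $(P2)$ a level line of $u(\cdot,t_0)$ would pass through $x_0$, a contradiction. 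Your argument instead fixes the finitely many Jordan curves bounding $F_{t_0}^{\lambda_j,l}$ and uses their continuity in $t$. This is cleaner, but be aware that the step from ``$x_0\in F_t^{\lambda_j,l}$'' to ``$x_0$ lies in a flat region of $u(\cdot,t)$'' is exactly where the content of the lemma sits: you must know that no evolved level line of any \emph{other} crown passes through $F_t^{\lambda_j,l}$ at time $t$, i.e.\ you are invoking the disjointness of the partition \eqref{eq:partition_t} asserted in Theorem \ref{thm::def::LLS}. Since that theorem precedes the lemma this is admissible, but it should be stated explicitly rather than buried in ``the conclusion follows''; the paper's backward compactness argument is precisely a self-contained substitute for that disjointness for times $t<t_0$.

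Separately, you conflate ``$x_0$ is not a collapsing point'' with ``$t_0$ is not a collapsing time''. The hypothesis does not exclude $t_0=t_*^k$, so you cannot in general shrink $\delta_0$ so that $(t_0-\delta_0,t_0+\delta_0)$ avoids every $t_*^k$; likewise ``$t_0<T(\Sigma_0^m)$ for every $m\in M$'' is false in general, since an interior curve of the flat region may already have collapsed before $t_0$, in which case $\Sigma_{t_0}^m=\emptyset$ and it has simply dropped out of the boundary. Neither issue is fatal: an empty curve trivially avoids $B(x_0,2r)$; a boundary curve collapsing exactly at time $t_0$ does so at its collapsing point, which by hypothesis is not $x_0$ and hence lies at positive distance from $x_0$, so continuity in the $C_p^{0,1}(S^1)$ topology still keeps it outside $B(x_0,2r)$ for $|t-t_0|$ small; and $\Sigma^e$ cannot collapse during $[t_0,t_0+\delta_0)$ for $\delta_0$ small, because at time $t_0$ it surrounds $B(x_0,3r)$ and the inclusion principle then forces $T(\Sigma^e)\geq t_0+ (3r)^2/2$. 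With these adjustments your argument closes.
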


\begin{proof}
Since $x_0$ belongs to a flat area of $u(\cdot,t_0)$ there exists a small ball $B(x_0,r_0)$ centered at $x_0$ meeting no other level line of $u(\cdot,t_0)$:
\begin{equation}\label{eq::flat}
\Sigma_{t_0}^{\lambda,i} \cap B(x_0,r_0) = \emptyset.
\end{equation}
The number of collapsing points being finite, we can also choose $r_0$ small enough, so that $B(x_0,r_0)$ contains no collapsing point of the evolution of $u$.
Let $\partial B(x_0,r(t))$ be the circle centered at $x_0$ and evolving by Curve Shortening such that $r(t_0) = r_0$.
\medskip

1. Fix $\delta_1 = r_0^2/4$ such that $\partial B(x_0,r(t))$ has not collapsed at time $t=t_0+\delta_1$. Then it follows from (\ref{eq::flat}) and from the inclusion principle that for all $t< t_0 + \delta_1$ no level line of $u(\cdot,t)$ meets the ball $B(x_0,r(t))$
$$
\Sigma_{t}^{\lambda,i} \cap B(x_0,r(t)) = \emptyset,
$$
where $r(t) = \sqrt{r_0^2 - 2(t-t_0)}$.
\medskip

2. We now prove that there exists a time $\delta_2$ and a radius $\rho >0$ such that for all $t\in(t_0-\delta_2,t_0)$ no level curve of $u(\cdot,t)$ meets $B(x_0,\rho)$.
Assume by contradiction that there exist $t_j$, $x_j$ and curves $\Sigma_{t_j}^{\lambda_j}$ of $LLS$ evolution of $u$  such that
\begin{equation}\label{eq:bwd_flat}
t_j \rightarrow t_0, \space\ x_j \rightarrow x_0 \hbox{ with } x_j\in\Sigma_{t_j}^{\lambda_j}.
\end{equation}
Consider the corresponding initial curves $\Sigma_{0}^{\lambda_j}$. Since the ball $B(x_0,r_0)$ contains no collapsing points of $u$, the evolutions $\Sigma_{t}^{\lambda_j}$ will have uniformly bounded curvature, and thus stay uniformly bounded in the $C_p^{0,1}(S^1)$ topology. It is then possible to extract a converging subsequence to $\Sigma_{0}^{\lambda}$. By the continuity property $(P2)$ in Theorem \ref{propsofLLSoncurves}, it follows that $\Sigma_{t_0}^{\lambda_0}$ contains $x_0$, which contradicts our  initial assumption.

\end{proof}

%================================================================================
\bigskip\section{Equivalence with the curvature motions}
%================================================================================

\medskip\subsection{Mean Curvature Motion}
We  rewrite the geometric curve shortening in the form
\begin{equation}\tag{CS}
\frac{\partial x}{\partial t}=k\nu,
\end{equation}
where $k$ denotes  the scalar curvature and $\nu$  the unit normal to the curve, with $\nu$ continuous and the sign of $k$ guaranteeing that $k\nu$ points towards the interior of the domain surrounded by the curve at convex points and towards the exterior at concave points.  We are interested in its equivalence with
\begin{equation}\tag{MCM}
\left\{
  \begin{array}{ll}
    \medskip u_t= |Du| curv(u),& \hbox{ in } \R^2\times [0,\infty)\\
    u(\cdot,0)=u_0,& \hbox{ on } \R^2.
  \end{array}
\right.
\end{equation}
where
$$
|Du|curv(u)=|Du|div(\frac{Du}{|Du|})=\sum_{i,j=1}^2(\delta_{ij}-\frac{u_{x_i}u_{x_j}}{|Du|^2})u_{x_ix_j}.
$$
We refer to a viscosity solution for the parabolic PDE, which is defined in terms of point-wise behavior with respect to a smooth test function. We use herein the definition presented by Morel and Guichard in \cite{guichard00image}, which was proven by Barles and Georgelin \cite{BG95} to be equivalent with the viscosity solutions given by Evans and Spruck in \cite{ES91} and Chen, Giga and Goto in \cite{CGG91}.

\begin{definition}\label{def::visc_sol}
A function $u\in C(\R^2\times[0,\infty))\cap L^\infty(\R^2\times[0,\infty))$ is a \textbf{viscosity sub-solution} of (MCM)
iff for each $\phi\in C^\infty(\R^2\times[0,\infty))$ such that $u-\phi$ has a local maximum at $(x_0,t_0)$ we have
\begin{eqnarray*}
\phi_t(x_0,t_0)\leq |D\phi|curv(\phi)(x_0,t_0)
&&\hbox{ if }  D\phi(x_0,t_0)\neq0 \\
\phi_t(x_0,t_0)\leq 0
&&\hbox{ if }  D\phi(x_0,t_0)=0 \hbox{ and } D^2\phi(x_0,t_0)=0.
\end{eqnarray*}
A function $u\in C(\R^2\times[0,\infty))\cap L^\infty(\R^2\times[0,\infty))$ is a \textbf{viscosity super-solution} of (MCM)
iff for each $\phi\in C^\infty(\R^2\times[0,\infty))$ such that $u-\phi$ has a local minimum at $(x_0,t_0)$ we have
\begin{eqnarray*}
\phi_t(x_0,t_0) \geq |D\phi|curv(\phi)(x_0,t_0)
&&\hbox{ if }  D\phi(x_0,t_0)\neq0 \\
\phi_t(x_0,t_0)\leq 0
&&\hbox{ if }  D\phi(x_0,t_0)=0 \hbox{ and } D^2\phi(x_0,t_0)=0.
\end{eqnarray*}
\end{definition}

\begin{remark} \label{simplificationdefinitionviscosity}The above definition can be further simplified \cite{guichard00image}:
replacing ``local maximum (minimum)`` with ''strict local maximum (minimum)`` one obtains an equivalent definition of viscosity solutions. Furthermore, it is enough to consider test functions of the form $\phi(x,t)=f(x) + g(t).$
\end{remark}

\begin{theorem}\label{thm::equivLLS}
Let $u_0\in\mathcal{VS}(\Omega)$.
Then the Level Lines Shortening evolution of the function $u_0$,
$$u(x,t)=LLS(t)u_0(x), \forall x\in\R^2, \forall t\in [0,\infty)$$
is a viscosity solution for  (MCM) with  initial data $u_0$,
\end{theorem}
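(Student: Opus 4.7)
The plan is to verify directly that $u(x,t)=LLS(t)(u_0)(x)$ satisfies Definition~\ref{def::visc_sol} at every space-time point, as a sub-solution and a super-solution simultaneously. The initial condition $u(\cdot,0)=u_0$ is built into the definition of $LLS$, and space-time continuity has been established earlier in this section. By Remark~\ref{simplificationdefinitionviscosity} it suffices to test at strict local extrema using separated test functions $\phi(x,t)=f(x)+g(t)$. I would write out the sub-solution case in detail; the super-solution case is strictly analogous, with all inclusions and orderings reversed and the roles of interior/exterior exchanged in the inclusion principle $(P4)$.

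Fix such a $\phi=f+g$ and a strict local maximum $(x_0,t_0)$ of $u-\phi$, and set $\lambda=u(x_0,t_0)$. The principal case is $Df(x_0)\neq 0$. The component of $\{f=f(x_0)\}$ through $x_0$ is locally a smooth arc with scalar curvature $k$ satisfying $|Df(x_0)|\,k=|Df|\,curv(f)(x_0)$. The strict max condition at $t=t_0$ gives $\{u(\cdot,t_0)\geq \lambda\}\subseteq\{f\geq f(x_0)\}$ in a punctured neighborhood of $x_0$. Exploiting properties $(L1)$--$(L3)$ of $u(\cdot,t_0)\in\mathcal{VS}(\Omega)$, I would extend this local arc into a $C^2$ Jordan curve $\Gamma_0$ which coincides with $\{f=f(x_0)\}$ near $x_0$ and surrounds (in the sense of Definition 2.1) the Jordan level curve $\Sigma_{t_0}^{\lambda,i_0}$ of $u(\cdot,t_0)$ through $x_0$ (or the exterior Jordan curve of the flat region at level $\lambda$ containing $x_0$). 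Curve Shortening sends $\Gamma_0$ to $\Gamma_t$, and by $(P4)$ one gets $Int(\Sigma_t^{\lambda,i_0})\subseteq Int(\Gamma_t)$ for short $t>t_0$, with first-order tangent contact at $x_0$ at time $t_0$. The normal velocity of $\Gamma_t$ at $x_0$ at time $t_0^+$ equals $k$ by the CS equation; combining this with the strict-max inequality $u(x_0,t)-\lambda\geq g(t)-g(t_0)+o(t-t_0)$ forces $g'(t_0)\leq |Df(x_0)|\,k$, which is the required viscosity inequality.

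If instead $Df(x_0)=0$ and $D^2f(x_0)=0$, a second-order Taylor expansion yields $f(x)-f(x_0)=o(|x-x_0|^2)$, so for every $\varepsilon>0$ the function $\phi(\cdot,t_0)$ is dominated near $x_0$ by the radial upper barrier $\varphi_\varepsilon(x)=f(x_0)+\varepsilon|x-x_0|^2$. Lemma~\ref{lemma::localcomp} together with the explicit radial evolution in Lemma~\ref{lemma::radial} gives $u(x_0,t)-\lambda\leq \varphi_\varepsilon(x_0,t)-\varphi_\varepsilon(x_0,t_0)=O(\varepsilon(t-t_0))$ for short times, and letting $\varepsilon\to 0$ forces $g'(t_0)\leq 0$. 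If $x_0$ sits in a flat region at a non-collapsing time, Lemma~\ref{lemma::flatness} makes $u$ locally constant in both $x$ and $t$ near $(x_0,t_0)$, so the viscosity inequality is immediate on both sides; the finitely many collapsing times are recovered by space-time continuity.

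The main obstacle is the non-degenerate step of the second paragraph: building the global Jordan curve $\Gamma_0$ which matches the $C^2$ geometry of $\{f=f(x_0)\}$ at $x_0$, surrounds the correct connected component of the $u$-level-set globally, and stays within the piecewise $C^1$ Jordan-curve class required by $(P4)$. The very-simple structure together with the monotone crown topology of Definition~\ref{def::crown} makes this possible, but the gluing of $\Gamma_0$ across flat-region boundaries requires care. Once $\Gamma_0$ is in place, the translation from the CS normal velocity to the viscosity PDE is a calibrated first-order time matching, and the dual construction with radial lower barriers yields the super-solution inequality. The affine variant is handled identically using Theorem~\ref{propsofLLASoncurves} and the LLAS radial evolution of Lemma~\ref{lemma::radial}.
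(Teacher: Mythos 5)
Your overall architecture matches the paper's: reduce to the sub-solution inequality, use Remark \ref{simplificationdefinitionviscosity} to take $\phi=f+g$ at a strict maximum, and split into three cases (flat region, $D\phi\neq 0$, $D\phi=D^2\phi=0$). The flat-region case via Lemma \ref{lemma::flatness} is exactly the paper's Lemma \ref{onaflatpoint}. But the central step in both remaining cases is gapped, for the same structural reason: you only ever evaluate the maximum inequality at the fixed point $x_0$, writing $u(x_0,t)-\lambda\geq g(t)-g(t_0)+o(t-t_0)$ --- which, note, has the wrong direction, since the maximum condition gives $u(x_0,t)-\lambda\leq g(t)-g(t_0)$. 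An upper bound on $u(x_0,\cdot)$ obtained from a barrier can never be combined with that inequality to bound $g'(t_0)$ from above; you need points where the maximum inequality is nearly saturated. The paper's key device is precisely this: it follows the backward locations $x(t)\in\Sigma_t^{\lambda,u}$ of $x_0$ along the actual curve-shortening history of the level line, where $u(x(t),t)=\lambda$ exactly, so the maximum inequality becomes $g(t_0)-g(t)\leq f(x(t))-f(x_0)$; the CS asymptotics $x(t)=x_0+(t-t_0)k^u\nu+o(t-t_0)$ (or $|x(t)-x_0|\simeq\sqrt{2(t_0-t)}$ at a collapsing point, via $(P3)$) then convert the spatial increment of $f$ into a multiple of $(t-t_0)$, and the curvature ordering $k^\phi\leq k^u$ closes the argument. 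This coupling of $g'(t_0)$ to the normal velocity of the level line is the whole content of the theorem and is absent from your sketch.

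Second, your barrier curve $\Gamma_0$ does not do the job you assign to it. Evolving $\Gamma_0$ by curve shortening and invoking $(P4)$ compares $\Sigma_t^{\lambda,u}$ with a \emph{curvature-driven} motion of the $f$-level set; at the touching point this yields only $k^f(x_0)\leq k^u(x_0)$, which you already have for free from second-order tangency at the spatial maximum. What must be compared with $\Sigma_t^{\lambda,u}$ is the level set $\{\phi(\cdot,t)=\lambda\}$ of the test function, whose normal velocity at $(x_0,t_0)$ is $-\phi_t/|D\phi|$, not its curvature; this is the paper's $y(t)$ construction. Consequently the ``main obstacle'' you identify --- gluing a global $C^2$ Jordan curve across flat-region boundaries --- is a difficulty you have created and need not solve: the relevant global Jordan curves are the level lines of $u$ themselves, which the structure of $\mathcal{VS}(\Omega)$ hands to you, and the test function only needs to be compared to them locally. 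Finally, ``recovering the collapsing times by space-time continuity'' is not a proof: a viscosity inequality is a pointwise condition on test functions and does not fill in isolated times for free; the paper verifies it at collapsing points directly, using the asymptotic roundness $(P3)$ together with the hypothesis $D^2\phi(x_0,t_0)=0$, which is exactly where that hypothesis is used.
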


\begin{proof}
It is sufficient to check that $u(x,t)$ is a viscosity sub-solution. Analogous assertions hold for viscosity super-solutions. Let $\phi\in C^\infty(\R^2\times[0,\infty))$ such that $u-\phi$ has a strict local maximum at $(x_0,t_0)$. Adding if necessary a constant, suppose that
\begin{equation}\label{eq:max_cond}
\left\{
  \begin{array}{ll}
  u(x_0,t_0)  =  \phi(x_0,t_0)=\lambda \\
  u(x,t) <  \phi(x,t),   \forall (x,t)\in V
  \end{array}
\right.
\end{equation}
where $V$ is a small neighborhood of $(x_0,t_0)$. The proof is completed by the next three lemmas, where we  distinguish two situations: either the point $x_0$ is inside a flat region of $u(\cdot,t_0)$ (lemma \ref{onaflatpoint}), or it belongs to some level line, singular or not of this function (lemma
\ref{onalevellinewithgradientzero} in the case where $D\phi(t_0,x_0)=0$,   and lemma \ref{onalevelinewithnonnullgradient} when the gradient is not zero).
\end{proof}

\begin{lemma} \label{onaflatpoint}
 Let $x_0$ be a point in a flat area of $u(\cdot,t_0)$. Then
 $$D\phi(x_0,t_0)=0 \hbox{ and }\phi_t(x_0,t_0)=0.$$
\end{lemma}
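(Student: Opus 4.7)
The plan is to convert the local flatness of $u$ around $(x_0,t_0)$, in both space and time, into a local-minimum property of the smooth test function $\phi$ at $(x_0,t_0)$, via the maximum condition (\ref{eq:max_cond}).

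For the spatial derivative, I would use the fact that the flat region $F_{t_0}^{\lambda_j,l}$ is open: one can fix $r>0$ with $B(x_0,r)\subset F_{t_0}^{\lambda_j,l}$, so that $u(\cdot,t_0)\equiv \lambda_j=\lambda$ on $B(x_0,r)$. Specialising (\ref{eq:max_cond}) to $t=t_0$ gives
$$
\phi(x,t_0)\ \geq\ u(x,t_0)\ =\ \lambda\ =\ \phi(x_0,t_0),\qquad \forall x\in B(x_0,r),
$$
so $x_0$ is a local minimum of the smooth function $\phi(\cdot,t_0)$ and $D\phi(x_0,t_0)=0$.

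For the time derivative, I would invoke Lemma \ref{lemma::flatness}. The hypothesis of that lemma requires $x_0$ not to be a collapsing point, and this is automatic here: at a collapsing time $t_*^k$ the collapsing point $x(\Sigma_0^{\lambda_k,i_k})$ is reached as the limit of a shrinking level curve, so every neighborhood of it intersects level lines at levels arbitrarily close to but different from $\lambda_k$, which is incompatible with $u(\cdot,t_0)\equiv \lambda$ on $B(x_0,r)$. Lemma \ref{lemma::flatness} then yields $\delta_0>0$ such that $x_0$ remains in a flat region of $u(\cdot,t)$ for all $|t-t_0|<\delta_0$, and space-time continuity of $u$ (just proved) forces that region to have level $\lambda$. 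Thus $u(x_0,t)\equiv \lambda$ for $|t-t_0|<\delta_0$, and plugging $x=x_0$ into (\ref{eq:max_cond}) gives
$$
\phi(x_0,t)\ \geq\ \lambda\ =\ \phi(x_0,t_0),\qquad |t-t_0|<\delta_0,
$$
so $t\mapsto \phi(x_0,t)$ attains a local minimum at $t_0$ and $\phi_t(x_0,t_0)=0$.

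The only substantive step is the exclusion of the collapsing-point case, which relies on the structural description of very simple functions and on property $(P2)$ from Theorem \ref{propsofLLSoncurves}. Once this is settled, both conclusions reduce to the elementary remark that a differentiable function cannot have nonzero first-order derivative at a (non-strict) local minimum.
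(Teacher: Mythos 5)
Your strategy is the same as the paper's (flatness of $u$ in a space--time neighborhood of $(x_0,t_0)$ plus the maximum condition (\ref{eq:max_cond}) make $(x_0,t_0)$ a local minimum of the smooth function $\phi$, so its first derivatives vanish), and the spatial half of your argument is correct. The gap sits exactly in the step you flag as the only substantive one: the claim that a point of a flat region of $u(\cdot,t_0)$ is automatically not a collapsing point is false. Take $u_0$ radial with a plateau, $u_0(x)=\min(1-|x|,\,1/2)$ near the origin. The origin is the common collapsing point of all the level circles $\{|x|=1-\lambda\}$, in particular of the crown $\{1/2\le|x|\le1\}$, which finally collapses there at time $1/2$; yet the origin lies in the flat region $\{|x|<\sqrt{1/4-2t}\}$ for every $t<1/8$. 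So for any $t_0\in(0,1/8)$ the hypothesis of Lemma \ref{lemma::flatness} fails at $(0,t_0)$ in the spatial sense in which its own proof uses it (``$B(x_0,r_0)$ contains no collapsing point''), while Lemma \ref{onaflatpoint} must still cover this point. Your accumulation argument only addresses the case where $t_0$ is itself the collapse time of a curve collapsing at $x_0$, not the case where $x_0$ is a collapsing point whose collapse time differs from $t_0$.

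Moreover, even in the case $t_0=t_*^k$ the accumulation argument is not right in general: when the collapsing curve is an interior boundary curve of a flat region (equivalently, the outermost curve of a crown nested inside it), every curve of that crown has already collapsed by time $t_*^k$, so a punctured neighborhood of the collapse point is filled by the flat region at level $\lambda_k$ and no level lines at levels close to but different from $\lambda_k$ survive near it. The correct and much simpler reason that this case cannot occur is that at the collapse instant $x_0$ lies on the degenerate level line $\Sigma_{t_0}^{\lambda_k,i_k}=\{x_0\}$, hence not in any open flat region. To close your proof you therefore need either to establish the conclusion of Lemma \ref{lemma::flatness} directly at collapsing points lying inside flat regions (which does hold: in the example above $u(0,t)\equiv 1/2$ for all $t<1/8$, because the offending curve stays at positive distance from $x_0$ for $t$ near $t_0$, or has already vanished), or to read the hypothesis of that lemma as excluding only the space--time collapse event $(x(\Sigma),T(\Sigma))$ and justify that reading against its proof. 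Note that the paper's own two-line proof invokes Lemma \ref{lemma::flatness} without verifying its hypothesis at all, so your instinct to check it was sound; it is only the verification that fails.
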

\begin{proof}
By   Lemma \ref{lemma::flatness}, the function $u$ is constant in a small neighborhood $\mathcal N(x_0, t_0)$.
From the local maximum condition we deduce that the point $(x_0,t_0)$ is a local minimum for the test
function $\phi\in C^{\infty}$, which yields the conclusion.

\end{proof}

\noindent We consider now the case when $x_0$ belongs to a level line $\Sigma_{t_0}^{\lambda,u}$ of the function $u(\cdot,t_0)$. By the construction of   $LLS(t)u_0$ this level line is following the classical curve shortening.

\begin{lemma}  \label{onalevelinewithnonnullgradient}
Let $x_0$ belong to a level line $\Sigma_{t_0}^{\lambda,u}$ of the function $u(\cdot,t_0)$. Let $\phi $ be a smooth test function such that at the maximum point $(x_0,t_0)$ of $u-\phi$
$$
D\phi(x_0,t_0)\neq 0.
$$
Then $\phi$ satisfies
\begin{equation}\nonumber
\phi_t\leq |D\phi|div(\frac{D\phi}{|D\phi|})  \hbox{ at } (x_0,t_0).
\end{equation}
\end{lemma}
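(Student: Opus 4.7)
The plan is to exploit the geometric meaning of the inequality by comparing the level line of $u$ at level $\lambda = u(x_0,t_0) = \phi(x_0,t_0)$ passing through $x_0$ with the corresponding level line of $\phi$. After adjusting by a constant, the strict maximum hypothesis reads $u(x,t)\leq \phi(x,t)$ on $V$ with equality at $(x_0,t_0)$; passing to upper level sets at level $\lambda$ gives the local inclusion
$$
\{u(\cdot,t)\geq \lambda\}\cap V \;\subseteq\; \{\phi(\cdot,t)\geq \lambda\}\cap V.
$$
The boundary of the right-hand set is smooth near $x_0$ because $D\phi(x_0,t_0)\neq 0$, and the boundary of the left-hand set is the curve shortening evolution of $\Sigma_{t_0}^{\lambda,u}$, which by $(P1)$--$(P2)$ of Theorem \ref{propsofLLSoncurves} is a smoothly evolving $C^\infty$ Jordan curve.

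I would choose orthonormal coordinates centred at $x_0$ with $e_2=D\phi(x_0,t_0)/|D\phi(x_0,t_0)|$, so that locally $\{\phi(\cdot,t_0)\geq\lambda\}$ is an epigraph. The implicit function theorem writes $\{\phi(\cdot,t)=\lambda\}$ as a smooth graph $y=H(x,t)$ near $(0,t_0)$, and the CS-evolved $u$-level line as a smooth graph $y=G(x,t)$. Both curves must be tangent to $e_1$ at $x_0$ for the set inclusion to hold on both sides of $x_0$. The inclusion then translates to $G(x,t)\geq H(x,t)$ in a neighbourhood of $(0,t_0)$, with equality there, so $(0,t_0)$ is a local minimum of $G-H$. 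This is the geometric core of the argument.

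From this local minimum one extracts $G_t(0,t_0)=H_t(0,t_0)$ and $G_{xx}(0,t_0)\geq H_{xx}(0,t_0)$. Each quantity has a direct interpretation. Differentiating $\phi(x,H(x,t),t)=\lambda$ in $t$ gives $H_t(0,t_0)= -\phi_t(x_0,t_0)/|D\phi(x_0,t_0)|$; the standard level-set curvature formula, with $\phi_x(x_0,t_0)=0$ in our frame, gives $H_{xx}(0,t_0)=-curv(\phi)(x_0,t_0)$; and since CS moves $\Sigma_t^{\lambda,u}$ with velocity equal to its curvature vector and the tangent is horizontal at $(0,t_0)$, one has $G_t(0,t_0)=G_{xx}(0,t_0)$. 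Chaining these,
$$
-\frac{\phi_t}{|D\phi|}(x_0,t_0) = H_t(0,t_0)=G_t(0,t_0)=G_{xx}(0,t_0)\geq H_{xx}(0,t_0)=-curv(\phi)(x_0,t_0),
$$
which rearranges to the desired inequality $\phi_t(x_0,t_0)\leq |D\phi|\,curv(\phi)(x_0,t_0)$.

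The main obstacle is securing a \emph{joint} smooth graph parametrization $y=G(x,t)$ of the CS flow in a space-time neighbourhood of $(0,t_0)$: one needs $t_0>0$ so that $(P1)$ provides $C^\infty$ regularity in $x$, together with the smooth $t$-dependence of curve shortening, which is a mild refinement of the $C^{0,1}_p$-continuity $(P2)$. A secondary subtlety is that the tangents of the two level curves must agree at $x_0$, but this is automatic from the set inclusion combined with $D\phi(x_0,t_0)\neq 0$. Once these technicalities are settled, the test-function minimum calculation above closes the proof.
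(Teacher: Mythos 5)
Your argument is correct, and it reaches the paper's conclusion by a somewhat different mechanism. The paper works pointwise along the normal: it introduces the backward CS-locations $x(t)$ of $x_0$ and the intersections $y(t)$ of the normal ray with $\Sigma_t^{\lambda,\phi}$, compares the two normal speeds by a difference quotient, invokes separately the curvature ordering $k^\phi(x_0,t_0)\leq k^u(x_0,t_0)$ of the two tangent, nested curves, and then converts $\langle \partial_t y,\nu\rangle$ into $\phi_t/|D\phi|$ by differentiating $\phi(y(t),t)=\lambda$. You instead pass to the epigraph formulation: the inclusion $\{u\geq\lambda\}\subseteq\{\phi\geq\lambda\}$ near $(x_0,t_0)$, written with both level curves as graphs $y=G(x,t)$ and $y=H(x,t)$ in the frame adapted to $D\phi(x_0,t_0)$, makes $(0,t_0)$ a local minimum of $G-H$, and the whole chain of inequalities (the first-order speed comparison \emph{and} the second-order curvature comparison) drops out of elementary calculus at that minimum, combined with the graph form of curve shortening $G_t=G_{xx}/(1+G_x^2)$ at a point where $G_x=0$ and the standard identities $H_t=-\phi_t/|D\phi|$, $H_{xx}=-curv(\phi)$. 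This buys a more self-contained argument (you derive, rather than quote, the curvature ordering, and you avoid the slightly delicate normal-projection/difference-quotient step of the paper), at the price of needing the joint space-time graph parametrization of the evolved level line; note that this is not really an extra refinement of $(P2)$ as you suggest, since $(P1)$ already gives joint $C^\infty$ regularity of the CS flow on $S^1\times(0,T(\Sigma_0))$, so for $t_0>0$ before collapse the implicit function theorem applied to the horizontal coordinate yields your $G(x,t)$ directly. Two small points worth making explicit if you write this up: the tangency of the two curves at $x_0$ (hence $G_x(0,t_0)=0$) should be spelled out from the inclusion, as you indicate; and the case where $t_0$ is exactly the collapsing time of the level line through $x_0$ must be excluded, which is exactly what the paper's remark preceding Lemma \ref{onalevellinewithgradientzero} does (at a collapsing point necessarily $D\phi(x_0,t_0)=0$, contradicting your hypothesis), or alternatively you can observe that the one-sided (backward in time) minimum already gives $G_t\leq H_t$ at $(0,t_0)$, which is all your chain of inequalities needs.
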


\begin{proof}

1. The non-degeneracy condition $D\phi(x_0,t_0)\neq 0$ and the regularity of the test function $\phi$ imply by the implicit function theorem that the iso-level set
$$
\Sigma_{t_0}^{\lambda,\phi}=\{x\in\Omega;\phi(x,t_0)=\lambda\}
$$
is a smooth graph in a neighborhood of $x_0$. A unit normal vector of $\Sigma_{t_0}^{\lambda,\phi}$ at point $x_0$ is
$$
\nu^\phi(x_0,t_0)=\frac{D\phi}{|D\phi|}(x_0,t_0).
$$
On the other hand, $x_0$ belongs to the smooth Jordan level line $\Sigma_{t_0}^{\lambda,u}$ of $u$.
By the local maximum condition at point $(x_0,t_0)$ the two graphs $\Sigma_{t_0}^{\lambda,\phi}$ and $\Sigma_{t_0}^{\lambda,u}$ are tangent at  $x_0$ and do not intersect in a small neighborhood of the point. Therefore, the unit normal vectors of these curves coincide up to their sign. We set
$$
\nu^{u}(x_0,t_0) = \frac{D\phi}{|D\phi|}(x_0,t_0).
$$
Furthermore, for short times  $t\in(t_0-\delta,t_0+\delta)$ with $\delta>0$ small enough, the $\lambda$ level set of $\phi(\cdot,t)$  denoted by
$$\Sigma_t^{\lambda,\phi}= \{x\in\Omega_t;\phi(x,t) = \lambda\}$$
remains a smooth graph in a neighborhood of $x_0$.
By the maximum condition (\ref{eq:max_cond}), $\Sigma_t^{\lambda,\phi}$ stays on the same side of   $\Sigma_t^{\lambda,u}$ (see Figure \ref{fig::ll_test}).

To fix ideas, suppose that for points $x$ close enough to $\Sigma_{t_0}^{\lambda,u}$,   $u(x)>\lambda$ in the interior domain bounded by the level line $\Sigma_{t_0}^{\lambda,u}$. Then the $\lambda-$level line of the test function $\Sigma_{t_0}^{\lambda,\phi}$ lies locally outside  the same domain. In addition, the normal vectors $\nu^u(x_0,t_0)$ and $\nu^\phi(x_0,t_0)$ point inwards the interior domain bounded by the level line $\Sigma_{t_0}^{\lambda,u}$.
\medskip

\begin{figure} \label{fig::ll_test}
   \centering
     \includegraphics[width=0.6\columnwidth]{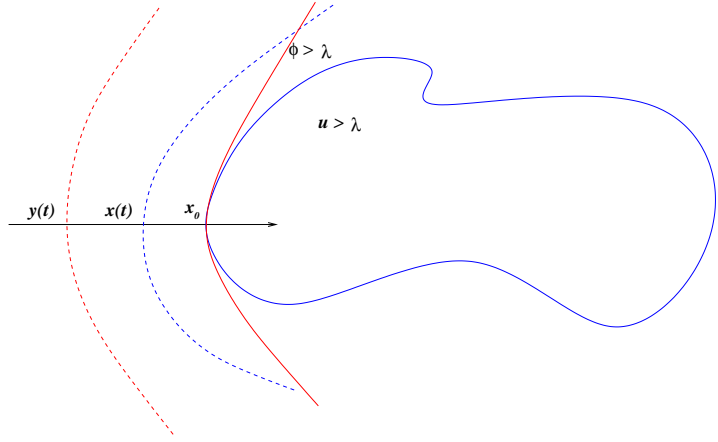}
\caption{\small The level line $\Sigma_t^{\lambda,\phi}$ (in red) stays on the same side of the level line $\Sigma_t^{\lambda,u}$ (in blue). For $t<t_0$, we consider the backwards locations $x(t)$ of the point $x_0$ on the level lines $\Sigma_t^{\lambda,u}$ as well as the intersections $y(t)$ of the normal direction with the level lines $\Sigma_t^{\lambda,\phi}$ of the test function.}%\vspace{-0.2in}
\end{figure}

2. We consider now the backwards locations $x(t)$ on the curves $\Sigma_t^{\lambda,u}$  of  the point $x_0\in\Sigma_{t_0}^{\lambda,u}$, curves which represent the curve shortening evolutions of some level line $\Sigma_0^{\lambda,u}$. Let $\nu(x(t),t)$ be the unit inward vector at point  $x_0$ of the level line $\in\Sigma_{t_0}^{\lambda,u}$. The vector points in the direction of $x_0-x(t)$ and hence there exists $d(x(t),t)$ such that
\begin{equation}\label{eq::x(t)}
x(t) = x_0 - d(x(t),t)\nu(x(t),t)
\end{equation}
By the smoothness property $(P1)$ of Theorem \ref{propsofLLSoncurves},
\begin{equation}\nonumber
\lim_{t\nearrow t_0} \nu(x(t),t) = \nu^u(x_0,t_0).
\end{equation}
Let $y(t)$ be the intersection point of the outward normal direction $-\nu(x(t),t)$ (we take here into account that we have considered upper level sets, i.e. $\nu^\phi$ points inwards) with the level curve $\Sigma_t^{\lambda,\phi}$ (situated outside $\Sigma_{t_0}^{\lambda,u}$). Then
there exists $d(y(t),t)$ such that
\begin{equation}\label{eq::y(t)}
y(t) = x_0 - d(y(t),t)\nu(x(t),t)
\end{equation}
and $D\phi(y(t),t)\neq 0$, since $\Sigma_t^{\lambda,\varphi}$ remains a local graph for short times. From the implicit function theorem we deduce that $y(t)$ is uniquely defined and varies smoothly in time. Thus the following limit exists:
$$
\lim_{t\nearrow t_0} \frac{y(t) - x_0}{t-t_0} = \frac{\partial y}{\partial t}(t_0).
$$
Furthermore, since  $\Sigma_t^{\lambda,\phi}$  stays on the same side of $\Sigma_t^{\lambda,u}$ we have
$$
d(x(t),t) \leq d(y(t),t).
$$
Taking the inner product with $\nu(x(t,)t)$ in equations (\ref{eq::x(t)}) and (\ref{eq::y(t)}) and dividing by $t<t_0$, the previous inequality  implies that
$$
\langle \frac{x(t) -x_0}{t-t_0},\nu(x(t),t)\rangle \leq  \langle \frac{y(t) -x_0}{t-t_0},\nu(x(t),t)\rangle.
$$
Passing to the limits as $t\rightarrow t_0$ we have
\begin{equation}\label{eq::xy_speeds}
\langle \frac{\partial x}{\partial t} (t_0),\nu^u(x_0,t_0)\rangle \leq \langle \frac{\partial y}{\partial t} (t_0),\nu^u(x_0,t_0)\rangle 
\end{equation}
and taking into account that (CS) gives
  \begin{equation}\label{eq::CSnormal}
  \langle \frac{\partial x}{\partial t} (t_0),\nu^u(x_0,t_0)\rangle = k^u(x_0,t_0)
  \end{equation}
we get
$$
k^u \leq \langle \frac{\partial y}{\partial t}(t_0),\nu^u(x_0,t_0)\rangle.
$$
But $\Sigma_{t_0}^u$ and $\Sigma_{t_0}^\phi$ are ordered by inclusion and meet at point $x_0$. Thus, their curvatures at $x_0$ are ordered
$ k^\phi(x_0,t_0) \leq k^u(x_0,t_0).$
On the other hand, from the regularity of the test function and the fact that $D\phi(x_0,t_0)\not = 0$ the curvature at $x_0$ can be expressed as
$$
k^\phi(x_0,t_0)= -\hbox{div}(\frac{D\phi}{|D\phi|})(x_0,t_0)
$$
Consequently
\begin{equation}\label{speed_div}
 -\hbox{div}(\frac{D\phi}{|D\phi|})(x_0,t_0)\leq  \langle \frac{\partial y}{\partial t}(t_0),\nu^u(x_0,t_0)\rangle.
\end{equation}\medskip

3. The sequence of points $y(t)$ found before belongs to the $\lambda-$level set of the test function, thus we have
$$
\phi(y(t),t)= \lambda, \hbox{ for } t\in(t_0-\delta,t_0].
$$
Differentiating this identity with respect to $t$ one gets for $t=t_0$
$$
\langle \frac{\partial y}{\partial t}(t_0),D\phi(x_0,t_0)\rangle + \phi_t(x_0,t_0)= 0.
$$
But at the touching point $x_0$ the normal vector to $\Sigma_{t_0}^{\lambda,\phi}$ can be expressed as $$\frac{D\phi}{|D\phi|}(x_0,t_0)=\nu^u(x_0,t_0).$$ Consequently the equality above at point $(x_0,t_0)$ becomes
$$
\phi_t(x_0,t_0)= -|D\phi|(x_0,t_0) \langle \frac{\partial y}{\partial t}(x_0,t_0),\nu^u(x_0,t_0)\rangle
$$
which by inequality (\ref{speed_div}) implies
$$
\phi_t(x_0,t_0)\leq |D\phi|\hbox{div}(\frac{D\phi}{|D\phi|})(x_0,t_0)\rangle.
$$
\end{proof}

\begin{remark}
If a point $x_0$ is a collapsing point at time $t_0$, then
$$u(x,t) = \lambda, \forall t\in[t_0,t_0+\delta)$$
Therefore the test function $\varphi$ satisfies $D\phi(x_0,t_0) = 0$.
\end{remark}

\begin{lemma}\label{onalevellinewithgradientzero}
Let $x_0$ belong to a level line $\Sigma_{t_0}^{\lambda,u}$ of the function $u(\cdot,t_0)$. Let $\phi $ be a test function such that
$
D\phi(x_0,t_0)= 0, \; D^2\phi(x_0,t_0)=0.
$
Then $\phi$ satisfies
$
 \phi_t(x_0,t_0)\leq 0$.
\end{lemma}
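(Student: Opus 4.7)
The plan is to reduce to separable test functions and apply a radial parabolic upper barrier posed at a slightly earlier time. By Remark~\ref{simplificationdefinitionviscosity} one may assume $\phi(x,t) = f(x) + g(t)$; the hypotheses then read $Df(x_0) = 0$ and $D^2f(x_0) = 0$, and the conclusion becomes $g'(t_0) \leq 0$. Smoothness of $f$ together with this second-order flatness gives, by Taylor's theorem, for each $\varepsilon > 0$ a radius $\delta = \delta(\varepsilon) > 0$ with
\begin{equation*}
f(x) \leq f(x_0) + \varepsilon|x-x_0|^2 \quad \text{for all } x \in B(x_0,\delta).
\end{equation*}
Combining this with the local maximum condition $u(x,t) \leq f(x) + g(t)$, valid on some neighborhood $V = B(x_0,r_0) \times (t_0 - \tau, t_0 + \tau)$ (shrinking $\delta$ so that $\delta \leq r_0$), we get, for every $h \in (0,\tau)$ and every $x \in B(x_0,\delta)$,
\begin{equation*}
u(x, t_0 - h) \leq f(x_0) + g(t_0 - h) + \varepsilon|x-x_0|^2 =: \psi_0(x).
\end{equation*}

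The function $\psi_0$ is radial increasing centered at $x_0$. Since $u(\cdot, t_0 - h) \in \mathcal{VS}(\Omega)$ by Theorem~\ref{thm::def::LLS} and $u(\cdot, t_0) = LLS(h)(u(\cdot, t_0 - h))$ by the semigroup property, we apply Lemma~\ref{lemma::localcomp} with initial time $t_0 - h$ and barrier $\psi_0$, together with the explicit evolution formula of Lemma~\ref{lemma::radial}, to conclude that, for $h > 0$ small enough (smaller than the collapsing time of the largest barrier level circle in $B(x_0,\delta)$),
\begin{equation*}
u(x_0, t_0) \leq \psi_0(x_0) \text{ evolved for time } h = f(x_0) + g(t_0 - h) + 2\varepsilon h.
\end{equation*}
The equality $u(x_0,t_0) = f(x_0) + g(t_0)$ at the maximum then yields $g(t_0) - g(t_0-h) \leq 2\varepsilon h$; dividing by $h$ and letting $h \to 0^+$ gives $g'(t_0) \leq 2\varepsilon$, and the arbitrariness of $\varepsilon > 0$ concludes that $\phi_t(x_0,t_0) = g'(t_0) \leq 0$.

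The main difficulty (and the reason the argument is not entirely mechanical) is that Lemma~\ref{lemma::localcomp} flows only forward in time, whereas $g'(t_0)$ is a derivative \emph{at} $t_0$. The trick of positioning the barrier at $t_0 - h$ circumvents this: the freedom to take $\varepsilon$ arbitrarily small is exactly what makes the second-order condition $D^2\phi(x_0,t_0) = 0$ bite — any nontrivial Hessian would contribute a non-negligible term $O(h)$ to the bound on $g(t_0)-g(t_0-h)$, matching the ``non-degenerate'' case of Lemma~\ref{onalevelinewithnonnullgradient}. The only technical care is to check that the neighborhood radius $\delta$ and the collapsing time of $\psi$ in $B(x_0,\delta)$ are kept uniform as $h \to 0^+$, which is immediate once $\varepsilon$ (and hence $\delta$) is fixed first.
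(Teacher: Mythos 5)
Your proof is correct, but it takes a genuinely different route from the paper's. The paper argues directly on the evolving level line through $x_0$: it tracks backwards locations $x(t)\in\Sigma_t^{\lambda,u}$, substitutes the curve shortening asymptotics $x(t)=x_0+(t-t_0)k\nu(x_0,t_0)+o(t-t_0)$ into the maximum inequality $g(t_0)-g(t)\leq f(x(t))-f(x_0)$, and must treat separately the case where $x_0$ is a collapsing point, for which it invokes property $(P3)$ (the round-point behaviour, giving $|x(t)-x_0|\simeq\sqrt{2(t_0-t)}$) together with $D^2f(x_0)=0$. You instead recycle the barrier machinery the paper uses for space-time continuity: the flatness $Df(x_0)=0$, $D^2f(x_0)=0$ lets you dominate $f$ by an arbitrarily flat paraboloid, which you install as a radial upper barrier at time $t_0-h$ and push forward through Lemma \ref{lemma::localcomp} with the explicit radial evolution of Lemma \ref{lemma::radial}; this relies on Theorem \ref{thm::def::LLS} (so that $u(\cdot,t_0-h)\in\mathcal{VS}(\Omega)$) and on the semigroup property, both available. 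Your version buys uniformity: no case distinction between collapsing and non-collapsing points (indeed you never use that $x_0$ lies on a level line at all), no appeal to $(P3)$ or to the pointwise CS expansion, and it transfers verbatim to LLAS via the affine radial formula, where the evolved barrier contributes only $\varepsilon\bigl(\tfrac43 h\bigr)^{3/2}=o(h)$, so the $\varepsilon$-limit is not even needed there. The paper's version is more directly geometric at the level of the curve law. The one point that genuinely needed care in your argument --- that the comparison horizon in Lemma \ref{lemma::localcomp} depends only on $\delta$, since the level circles of the barrier are independent of $h$ up to an additive shift of levels, hence the horizon is uniform as $h\to 0^+$ --- is correctly identified and settled.
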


\begin{proof}
By Remark \ref{simplificationdefinitionviscosity} we can assume that the test function has the form $\phi(x,t)=f(x)+g(t)$.
By assumption, for every $(x,t)$ in a neighborhood of $(t_0,x_0)$ we have
\begin{equation}\label{localinequality}
u(x,t)-f(x)-g(t)\leq u(x_0,t_0)-f(x_0)-g(t_0).
\end{equation} 

1. Assume first that $\Sigma_{t_0}^{\lambda,u}$ is not  reduced to a point and say it is the curve shortening evolution of the original level line $\Sigma_{0}^{\lambda,u}$. Denote by  $\Sigma_{t}^{\lambda,u}$ the intermediate evolutions for $0<t<t_0$. Consider, as before, for short times  $t \in(t_0-\delta,t_0]$  the points $x(t)$ belonging to $\Sigma_{t}^{\lambda,u}$ such that $x(t) \rightarrow x_0 \hbox{ as } t\to t_0$.
Since $u(x(t),t)=u(x_0,t_0)$ from inequality \eqref{localinequality}  we get
\begin{equation}\label{inegalitefg}
g(t_0)-g(t)\leq f(x(t))-f(x_0).
\end{equation}
However, since the level line evolves by curve shortening, we also have
\begin{equation}\label{eq::CSasymp}
x(t)=x_0+(t-t_0)k(x_0,t_0)\nu(x_0,t_0)+o(t-t_0).
\end{equation}
Substituting this asymptotic expansion in \eqref{inegalitefg} and recalling that  $Df(x_0,t_0)=0$ yields
$$g'(t_0)(t_0-t)\leq o(t_0-t),$$ which implies $g'(t_0)\leq 0$.
Since $g'(t_0)=\phi_t(x_0,t_0)$, this proves the announced statement.
\medskip

2. The only case not treated by the above argument is when $x_0$ is the collapsing point of some level line $\Sigma_t^{\lambda,u}$. Accordingly, $t_0$ is its collapsing time.
In this case, the level line does not have a normal direction at $(x_0,t_0)$ and consequently equation (\ref{eq::CSasymp}) is not valid anymore.

Nevertheless, by Theorem \ref{propsofLLSoncurves}, property $(P3)$, for short times $t\in(t_0-\delta,t_0)$ the points $x(t)$ lie approximatively on a circle of radius $R(t)$ respectively, with $R(t_0)=0$.
Thus
\begin{equation}\label{eq::CSasympR}
 |x(t)-x_0|\simeq   \sqrt{2(t_0-t)}.
\end{equation}
Substituting this asymptotic expansion in \eqref{inegalitefg} and taking into account that  $$Df(x_0,t_0)=0 \hbox{ and } D^2f(x_0,t_0)=0$$ we obtain
$$g'(t_0)(t_0-t)\leq o(t_0-t),$$ which implies  again $g'(t_0)\leq 0$.
\end{proof}

The Level Lines Shortening can be extended by density to an operator acting on the class of Lipschitz functions.

\begin{corollary}
For each $u_0\in Lip(\Omega)$ consider a uniform approximation by very simple functions $u_0^n\in\mathcal{VS}(\Omega)$ and define its LLS evolution by
$$
LLS(t)u_0(x) = \lim_{n\rightarrow\infty}\left(LLS(t)u_0^n(x)\right),
$$
Then the right hand side member is well defined and is a solution of $(MCM)$ with initial data $u_0$.
\end{corollary}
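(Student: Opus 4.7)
The plan is to combine the stability of viscosity solutions under uniform convergence with the uniqueness theorem for (MCM). By Theorem \ref{thm::equivLLS}, each $v^n(x,t):=LLS(t)u_0^n(x)$ is a viscosity solution of (MCM) with initial datum $u_0^n$; by Lemma \ref{lemma:approxVS}, $u_0^n\to u_0$ uniformly on $\Omega$.

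First I would establish uniform equicontinuity of the sequence $v^n$. From Theorem \ref{thm::def::LLS} and the construction in Lemma \ref{lemma:approxVS} (the function $f_\varepsilon$ used there is $1$-Lipschitz and nondecreasing, hence $\mathrm{Lip}(u_0^n)\leq\mathrm{Lip}(u_0)$), each $v^n(\cdot,t)$ is $L$-Lipschitz in space with $L:=\mathrm{Lip}(u_0)$, uniformly in $n$. The Space-Time Continuity proposition then yields the modulus
\begin{equation*}
|v^n(x,t)-v^n(y,s)|\leq L\bigl(|x-y|+\sqrt{2|t-s|}\bigr),
\end{equation*}
independent of $n$. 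Since the $v^n$ are also uniformly bounded by $\|u_0\|_\infty$, Arzelà–Ascoli yields a subsequence $v^{n_k}$ converging uniformly on compact subsets of $\Omega\times[0,\infty)$ to some $v\in C^0(\Omega\times[0,\infty))$.

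The standard stability result for viscosity solutions (see the user's guide \cite{CIL92}) asserts that the uniform limit of viscosity solutions of (MCM) is again a viscosity solution of (MCM); thus $v$ solves (MCM) with initial datum $u_0$, since $v(\cdot,0)=\lim_k u_0^{n_k}=u_0$ uniformly. The uniqueness theorem for (MCM) in the class of bounded uniformly continuous functions proved by Chen-Giga-Goto \cite{CGG91} and Evans-Spruck \cite{ES91} then forces every limit point of the full sequence to coincide with $v$, so $v^n\to v$ uniformly on compact sets and the limit is independent of the chosen approximating sequence. This simultaneously gives the well-definedness of $LLS(t)u_0$ and the fact that it is a viscosity solution of (MCM) with datum $u_0$.

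The delicate step, and the one that genuinely uses the preceding machinery, is the uniform equicontinuity in $t$: the modulus $L\sqrt{2(t-s)}$ must not depend on $n$. This holds because the barrier argument behind the Space-Time Continuity proposition (Lemmas \ref{lemma::radial} and \ref{lemma::localcomp}) is driven only by the Lipschitz constant of the initial datum, which is controlled by $\mathrm{Lip}(u_0)$ for every $n$, and not by the combinatorial data of $u_0^n$ such as its number of critical levels (which typically blows up as $\varepsilon\to 0$). Once this uniformity is secured, stability and uniqueness of viscosity solutions give the conclusion immediately.
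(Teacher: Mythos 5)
Your proof is correct, but it takes a genuinely different route from the paper's. The paper does not use compactness at all: it applies the comparison principle for (MCM) directly to the pair of solutions $u^n, u^m$, obtaining the $L^\infty$-contraction estimate $\max_{x}(u^n(x,t)-u^m(x,t))\leq \max_{x}(u_0^n(x)-u_0^m(x))$, which makes $\{LLS(t)u_0^n\}_n$ a Cauchy sequence in the sup norm because $\{u_0^n\}$ is; the uniform limit then exists on all of $\Omega$ and is a viscosity solution by stability. Your argument instead extracts a subsequential limit via uniform Lipschitz bounds in space (propagated by Theorem \ref{thm::def::LLS} and controlled by $\mathrm{Lip}(u_0)$ through the $1$-Lipschitz reparametrization $f_\varepsilon$ of Lemma \ref{lemma:approxVS}), the $n$-independent temporal modulus $L\sqrt{2|t-s|}$ from the Space--Time Continuity proposition, and Arzel\`a--Ascoli, and then invokes the uniqueness theorem of Chen--Giga--Goto and Evans--Spruck to upgrade subsequential convergence to convergence of the full sequence. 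Both routes ultimately rest on the comparison principle, but they use it differently: the paper uses it quantitatively as a contraction, which yields the Cauchy property and hence well-definedness in one stroke and requires no a priori compactness; you use it qualitatively (as uniqueness) and must therefore supply the equicontinuity estimates yourself. Your version has the advantage of making explicit that the space-time modulus of continuity is uniform in $n$ --- a genuinely useful observation, since the combinatorial complexity of $u_0^n$ degenerates as the approximation is refined --- and of handling directly the independence of the limit from the chosen approximating sequence, which the paper leaves implicit (it would follow there by interleaving two approximating sequences in the same contraction estimate).
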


\begin{proof}
From the previous theorem we know that the LLS evolutions of the very simple functions $u_0^n$
$$
u^n(\cdot,t) = LLS(t)u_0^n
$$
are solutions of the mean curvature equation. By the comparison principle we know that
$$
\max_{x\in\Omega}(u^n(x,t) - u^m(x,t)) \leq \max_{x\in\Omega}(u_0^n(x) - u_0^m(x)).
$$
Since the sequence of very simple functions is uniformly convergent, we deduce that for each $t>0$, the family $\{LLS(t)u_0^n\}_n \subset\mathcal{VS}(\Omega)$ is a Cauchy sequence in the $||\cdot||_\infty$ norm.
By the stability properties of viscosity solutions, the limit function $$u(x,t) = \lim_{n\rightarrow\infty}{u^n(x,t)}$$
is also a viscosity solution of $(MCM)$ and hence satisfies
$$
Lip(u(\cdot,t))\leq Lip(u_0).
$$
\end{proof}

\medskip \subsection{Affine Curvature Motion}
Similarly, one can connect the affine shortening
\begin{equation}\tag{AS}
\frac{\partial x}{\partial t}=k^{1/3}\nu,
\end{equation}
where $\nu(\cdot,t)$ is the inner unit normal vector of the curve $x(\cdot,t)$ and $k(\cdot,t)$ the signed scalar curvature corresponding the choice of $\nu(\cdot,t)$,
with the affine curvature motion
\begin{equation}\tag{ACM}
\left\{
  \begin{array}{ll}
    \medskip u_t= |Du| \big(curv(u)\big)^{1/3},& \hbox{ in } \R^2\times [0,\infty)\\
    u(\cdot,0)=u_0,& \hbox{ on } \R^2.
  \end{array}
\right.
\end{equation}
In the definition of these nonlinear evolutions, for $x\in\R$, $x^{1/3}$ stands for $\hbox{sgn}(x)|x|^{1/3}$.
\begin{theorem}\label{thm::equivLLAS}
Let $u_0\in\mathcal{VS}(\Omega)$.
Then the Level Lines Affine Shortening evolution of the function $u_0$,
$$u(x,t)=LLAS(t)u_0(x), \forall x\in\R^2, \forall t\in [0,\infty)$$
is a viscosity solution for  (ACM) with  initial data $u_0$.
\end{theorem}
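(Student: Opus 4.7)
The plan is to adapt the proof of Theorem~\ref{thm::equivLLS} step-by-step, replacing curve shortening by affine shortening throughout. The groundwork is already in place: Theorem~\ref{propsofLLASoncurves} guarantees that all the properties of curve shortening used in the LLS equivalence (inclusion principle, min-distance, space-time continuity, convexity preservation) also hold for affine shortening; the Corollary following Theorem~\ref{thm::def::LLS} makes $LLAS(t)$ a well-defined semigroup $\mathcal{VS}(\Omega)\to\mathcal{VS}(\Omega)$; and Lemmas~\ref{lemma::radial}, \ref{lemma::localcomp}, \ref{lemma::flatness} were stated simultaneously for LLS and LLAS. The Lipschitz estimate and space-time continuity of $u(\cdot,t)=LLAS(t)u_0$ follow from exactly the same argument as for LLS, now using the radial affine barrier $\phi(x,t)=u(x_0,t_0)+L\bigl(|x-x_0|^{4/3}+\tfrac43(t-t_0)\bigr)^{3/4}$ provided by Lemma~\ref{lemma::radial}.

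To prove the viscosity sub-solution inequality at a strict local maximum $(x_0,t_0)$ of $u-\phi$, I would split into the three cases of Theorem~\ref{thm::equivLLS}. The flat-region case is identical to Lemma~\ref{onaflatpoint}: Lemma~\ref{lemma::flatness} gives $u\equiv\lambda$ in a space-time neighborhood, so $\phi$ has a local minimum there and $D\phi=0=\phi_t$. When $x_0$ lies on a level line with $D\phi(x_0,t_0)\neq 0$, I would reproduce the geometric construction of Lemma~\ref{onalevelinewithnonnullgradient}: introduce backward points $x(t)\in\Sigma_t^{\lambda,u}$ and their normal projections $y(t)\in\Sigma_t^{\lambda,\phi}$ along the common unit normal $\nu^u=D\phi/|D\phi|$. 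The single change is that (CS) is replaced by (AS), giving
$$\Big\langle \tfrac{\partial x}{\partial t}(t_0),\nu^u(x_0,t_0)\Big\rangle = (k^u)^{1/3}(x_0,t_0).$$
The tangential comparison at $x_0$ still yields $k^\phi(x_0,t_0)\leq k^u(x_0,t_0)$, and since $z\mapsto z^{1/3}$ (in the signed sense $\operatorname{sgn}(z)|z|^{1/3}$) is monotone, $(k^\phi)^{1/3}\leq(k^u)^{1/3}$. Differentiating $\phi(y(t),t)=\lambda$ at $t_0$ then produces $\phi_t(x_0,t_0)\leq |D\phi|\bigl(curv(\phi)\bigr)^{1/3}(x_0,t_0)$, as required.

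The degenerate case $D\phi(x_0,t_0)=0$, $D^2\phi(x_0,t_0)=0$ is where I expect the main technical care. Reducing as in Remark~\ref{simplificationdefinitionviscosity} to $\phi(x,t)=f(x)+g(t)$, the inequality $g(t_0)-g(t)\leq f(x(t))-f(x_0)$ must be combined with an asymptotic estimate for $|x(t)-x_0|$. When $\Sigma_{t_0}^{\lambda,u}$ is smooth at $x_0$, the AS expansion $x(t)=x_0+(t-t_0)(k^u)^{1/3}\nu+o(t-t_0)$ combined with $Df(x_0)=0$ immediately gives $g'(t_0)\leq 0$. The collapsing sub-case is the delicate one: property $(P3)'$ says only that the shrinking curve approaches an ellipse rather than a circle. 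Since an ellipse has bounded eccentricity, I would sandwich $\Sigma_t^{\lambda,u}$ near collapse between two radially-symmetric affine evolutions of slightly different radii by Lemma~\ref{lemma::localcomp}, which via Lemma~\ref{lemma::radial} yields the asymptotic bound $|x(t)-x_0|\lesssim(t_0-t)^{3/4}$. Together with $D^2f(x_0)=0$ this gives $f(x(t))-f(x_0)=o(|x(t)-x_0|^2)=o\bigl((t_0-t)^{3/2}\bigr)=o(t_0-t)$, so $g'(t_0)=\phi_t(x_0,t_0)\leq 0$. The super-solution inequality is entirely dual, and the extension to general Lipschitz initial data proceeds exactly as in the corollary to Theorem~\ref{thm::equivLLS}: uniform approximation by very simple functions via Lemma~\ref{lemma:approxVS} and stability of viscosity solutions under uniform convergence.
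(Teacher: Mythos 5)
Your proposal follows essentially the same route as the paper's proof: reuse the geometric argument of Theorem~\ref{thm::equivLLS} verbatim, replacing the normal speed $k^u$ by the signed cube root $(k^u)^{1/3}$ (whose monotonicity preserves the ordering $k^\phi\leq k^u$) in the nondegenerate case, and replacing the collapse asymptotic $|x(t)-x_0|\simeq\sqrt{2(t_0-t)}$ by $|x(t)-x_0|\simeq\bigl(\tfrac43(t_0-t)\bigr)^{3/4}$ in the degenerate case. Your sandwiching of the collapsing curve between radial affine evolutions is just a slightly more explicit justification of the asymptotic that the paper asserts directly from $(P3)'$, so the two proofs coincide in substance.
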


\begin{proof}
The proof of Theorem \ref{thm::equivLLS} is purely geometric, thus the same arguments apply for level lines affine shortening. 
\medskip

1. When $D\phi(x_0,t_0)\neq 0$ we need to estimate from above the right hand side of 
$$
\phi_t(x_0,t_0)= -|D\phi|(x_0,t_0) \langle \frac{\partial y}{\partial t}(x_0,t_0),\nu^u(x_0,t_0)\rangle.
$$
The proof is  literally the same up to inequality (\ref{eq::xy_speeds})
$$
\langle \frac{\partial x}{\partial t} (t_0),\nu^u(x_0,t_0)\rangle \leq  \langle \frac{\partial y}{\partial t} (t_0),\nu^u(x_0,t_0)\rangle. 
$$
The only difference it makes with the previous proof is when (AS) comes into play. More precisely the evolution equation (\ref{eq::CSnormal}) at the maximum point $(x_0,t_0)$ should be replaced by an affine shortening evolution 
\begin{equation}\label{eq::ASnormal}
\langle \frac{\partial x}{\partial t} (t_0),\nu^u(x_0,t_0)\rangle = \left(k^u(x_0,t_0)\right)^{1/3}
\end{equation}
On the other hand $k^\phi(x_0,t_0)\leq k^u(x_0,t_0)$ which implies 
$$
\big(k^\phi(x_0,t_0)\big)^{1/3} = \hbox{sgn}(k^\phi)|k^\phi(x_0,t_0)|^{1/3} 
\leq 
\hbox{sgn}(k^u)|k^u(x_0,t_0)|^{1/3} = \big(k^u(x_0,t_0)\big)^{1/3}
$$
But for the test function $\phi$, $k^\phi(x_0,t_0)= -curv(\phi)(x_0,t_0)$. Hence 
$$
-(curv(\phi))^{1/3} \leq  \langle \frac{\partial y}{\partial t} (t_0),\nu^u(x_0,t_0)\rangle. 
$$
from where we deduce the desired viscosity inequality.
\medskip

2. For the case $D\phi(x_0,t_0)= 0$ and  $D^2\phi(x_0,t_0)= 0$
it is sufficient to replace the asymptotic expansions (\ref{eq::CSasymp}) by
\begin{equation}\label{eq::ASasymp}
x(t)=x_0+(t-t_0)\big(k(x_0,t_0)\big)^{1/3}\nu(x_0,t_0)+o(t-t_0).
\end{equation}
respectively (\ref{eq::CSasympR}) by
\begin{equation}\label{eq::ASasympR}
 |x(t)-x_0|\simeq   \sqrt[\frac43]{\frac43(t_0-t)}.
\end{equation}
This concludes the proof.

\end{proof}

%================================================================================
\bigskip\section{Numerical Implementation and Applications}
%================================================================================

The discrete Level Lines Shortening Algorithm performs accurate sub-pixel evolution by mean curvature motion.  
The complexity of the algorithm is directly proportional to the total variation of the image, since it acts simultaneously and independently on all of the level lines of the image (of course up to a quantization step). 
The main goal of the implementation is to obtain and move level lines with arbitrarily high sub-pixel precision, overcoming thus all the drawbacks of finite difference schemes based on pixel approximations. 
Moving simultaneously  level lines extracted with high sample precision allows straight level lines with high gradient to stand still with LLS, whereas they are diffused by FDS, even in its stack variant. Further example and details about the numerical implementation are given in  \cite{CMM11}. The following example illustrates the recovery of shapes freed from their aliasing, JPEG, and noise artifacts.

\subsection{Image restoration and visualization}
Aliasing due to pixelization is common in scanned documents.  LLAS  can be used for a graphic quality improvement smoothing  contours, see Fig. \ref{mouselaughing}.
After smoothing, pixelized level  lines become accurate curves with sub-pixel control points, whose curvature can be faithfully computed. Thus the whole chain can be viewed as a numerical preprocessing before further numerical analysis and feature extraction.  But there is also a strong interest in the direct visualization of the level lines and of the microscopic curvature map of an image. 

\begin{figure}[ht]
 \centering
 {\includegraphics[width=0.3\linewidth]{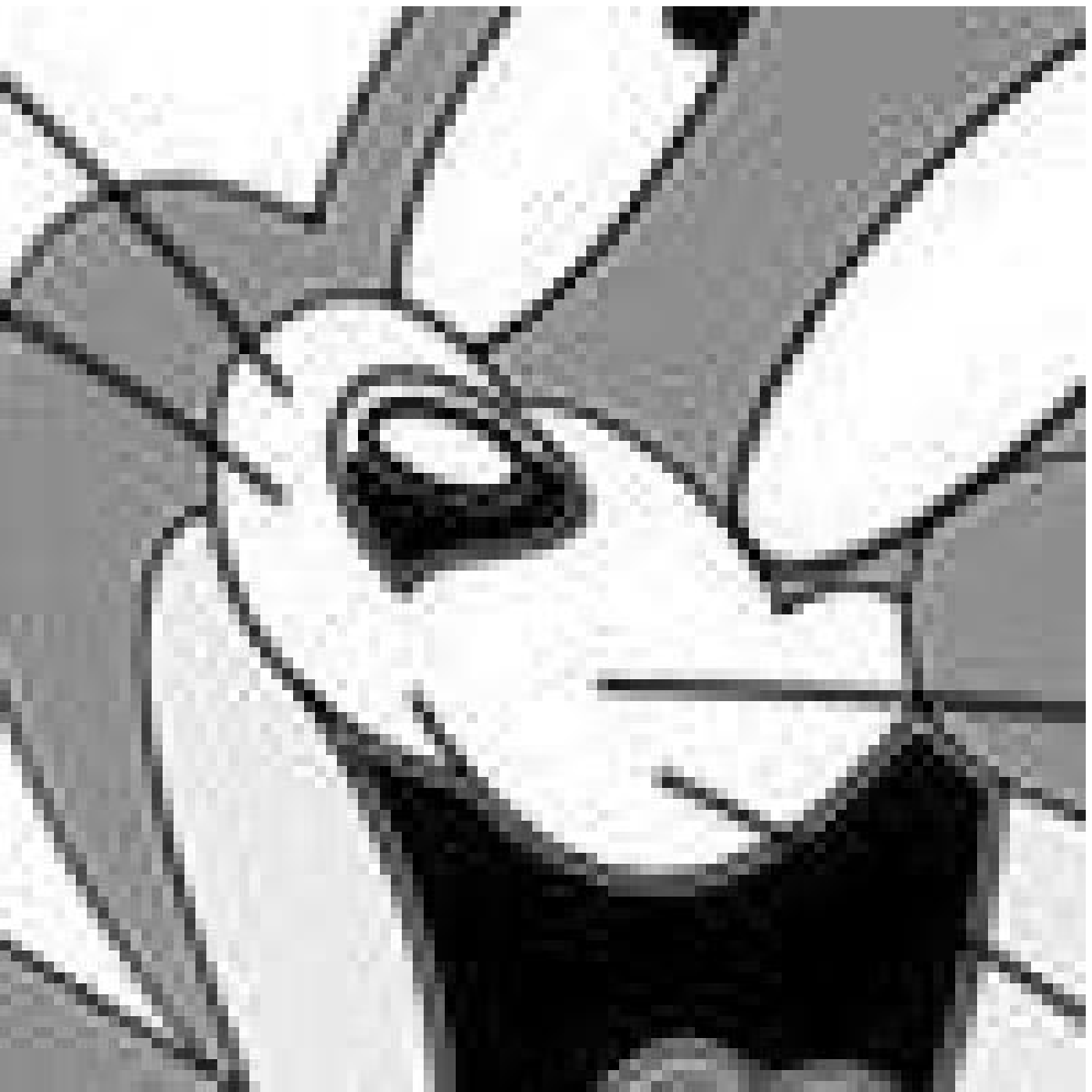}}
 {\includegraphics[width=0.3\linewidth]{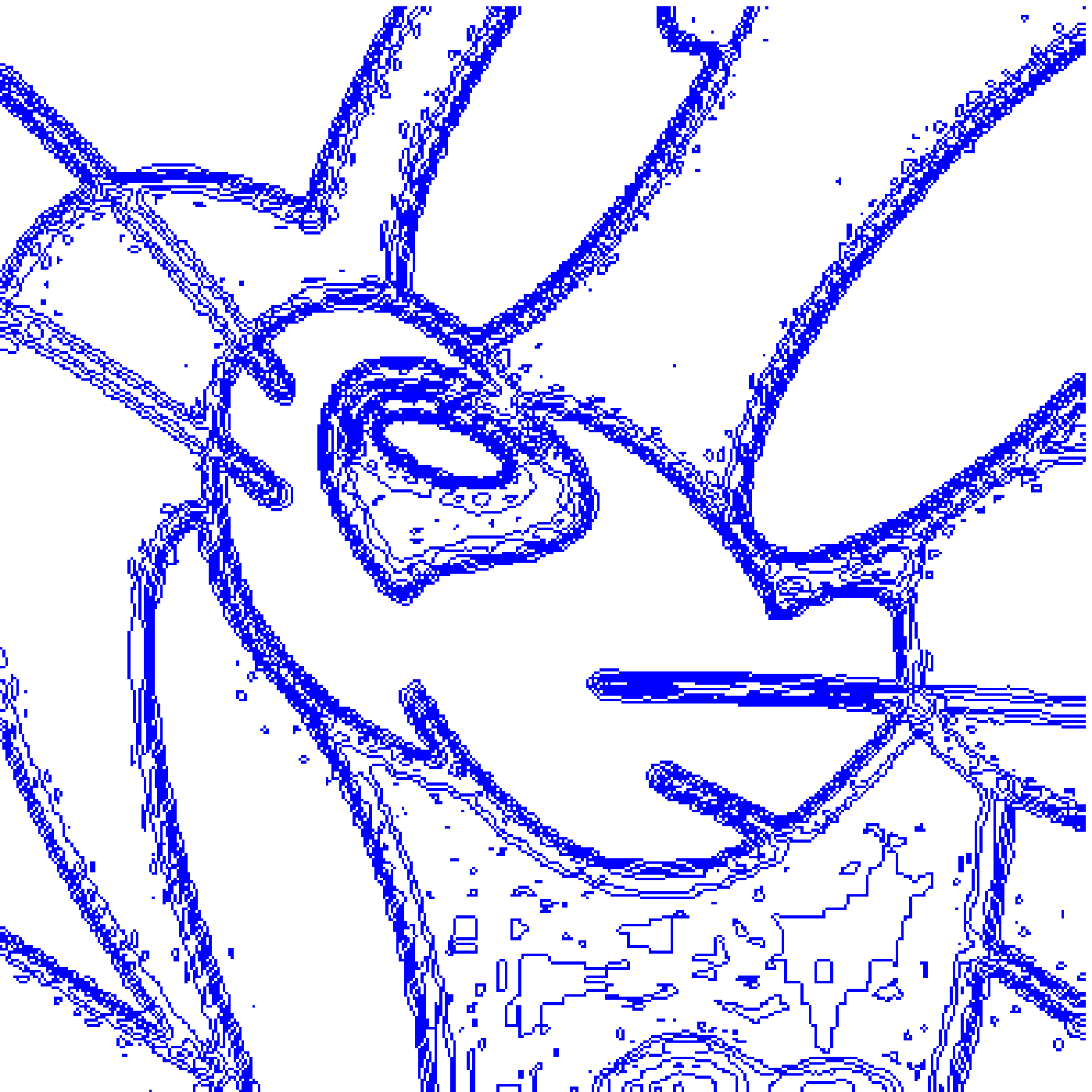}}
 
 {\includegraphics[width=0.3\linewidth]{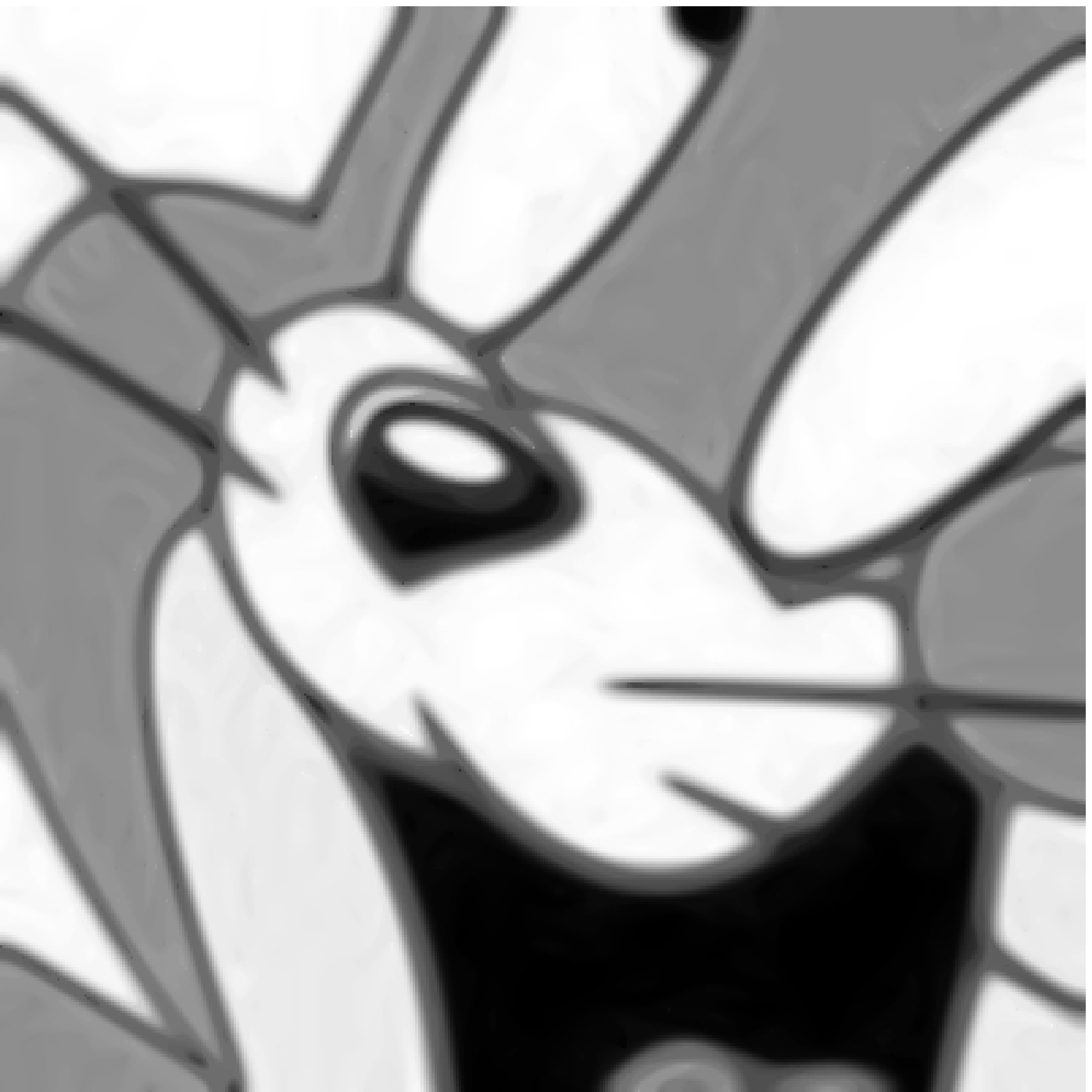}}
 {\includegraphics[width=0.3\linewidth]{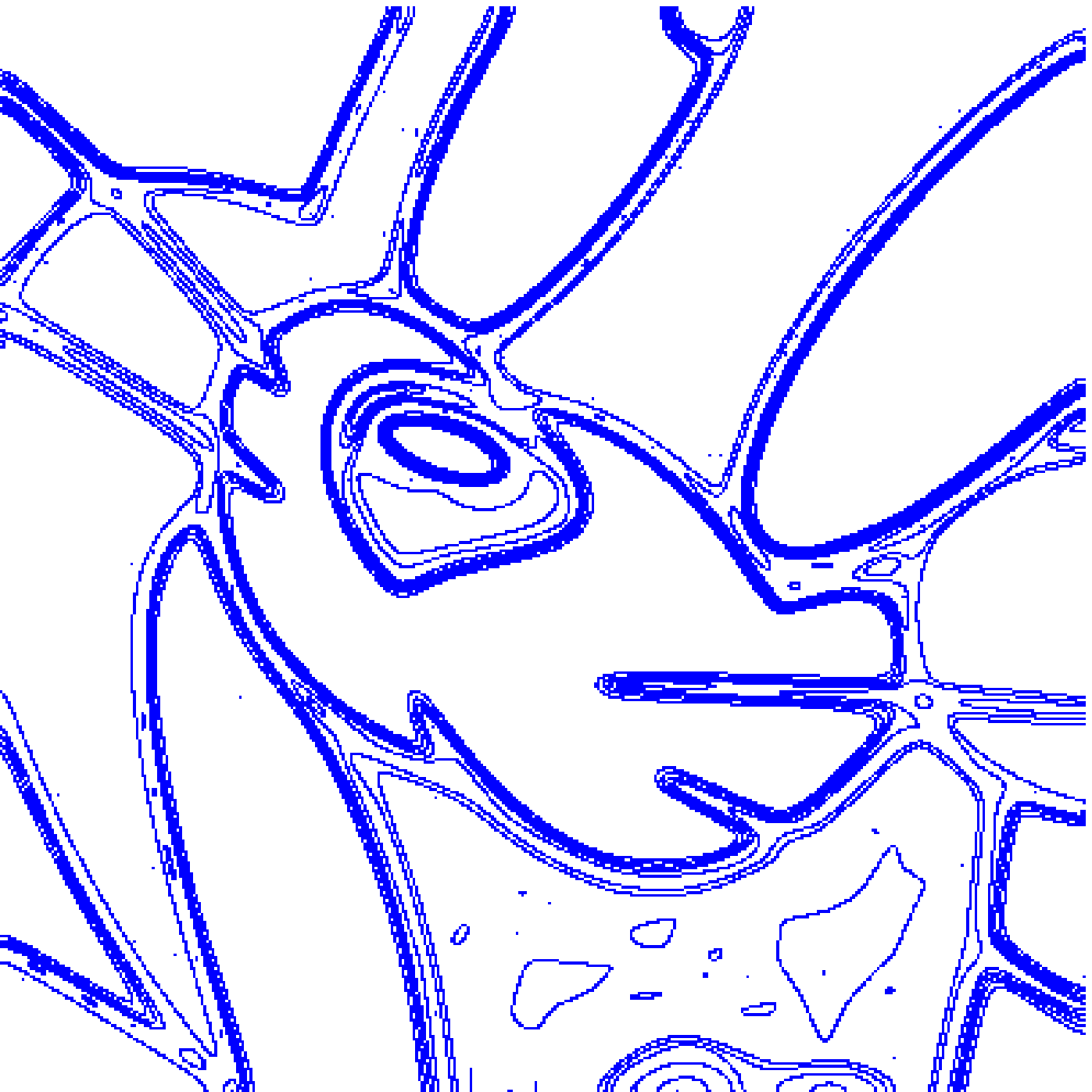}}
 \caption{\small Top: original cartoon image and its corresponding bilinear level lines. Bottom: LLAS evolution and  smoothed level lines.\label{mouselaughing}}
\end{figure}

\medskip
%---------------------------------------------------------------------------
\subsection{Fingerprints restoration and discrimination}
%---------------------------------------------------------------------------

Minutiae such as cores, bifurcations and ridge endings characterize uniquely fingerprints. Their detection requires a careful smoothing, particularly to avoid a spurious diffusion mixing the ridges.  The main objective of smoothing is to sieve the curvature  extrema, which allow the fingerprint discrimination.
\begin{figure}[!ht]
\centering
{\includegraphics[width=0.24\linewidth]{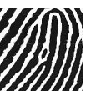}}
{\includegraphics[width=0.24\linewidth]{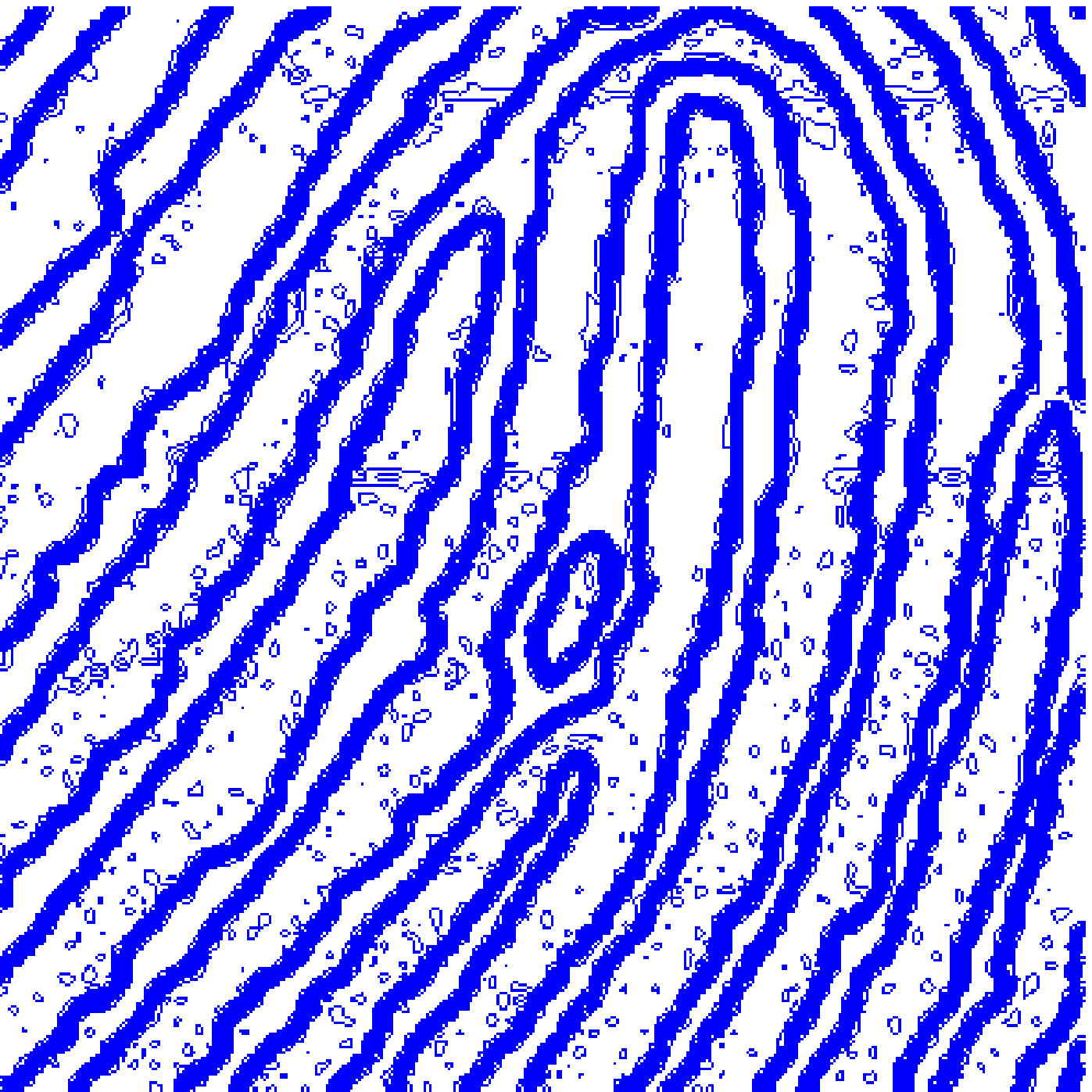}}
{\includegraphics[width=0.24\linewidth]{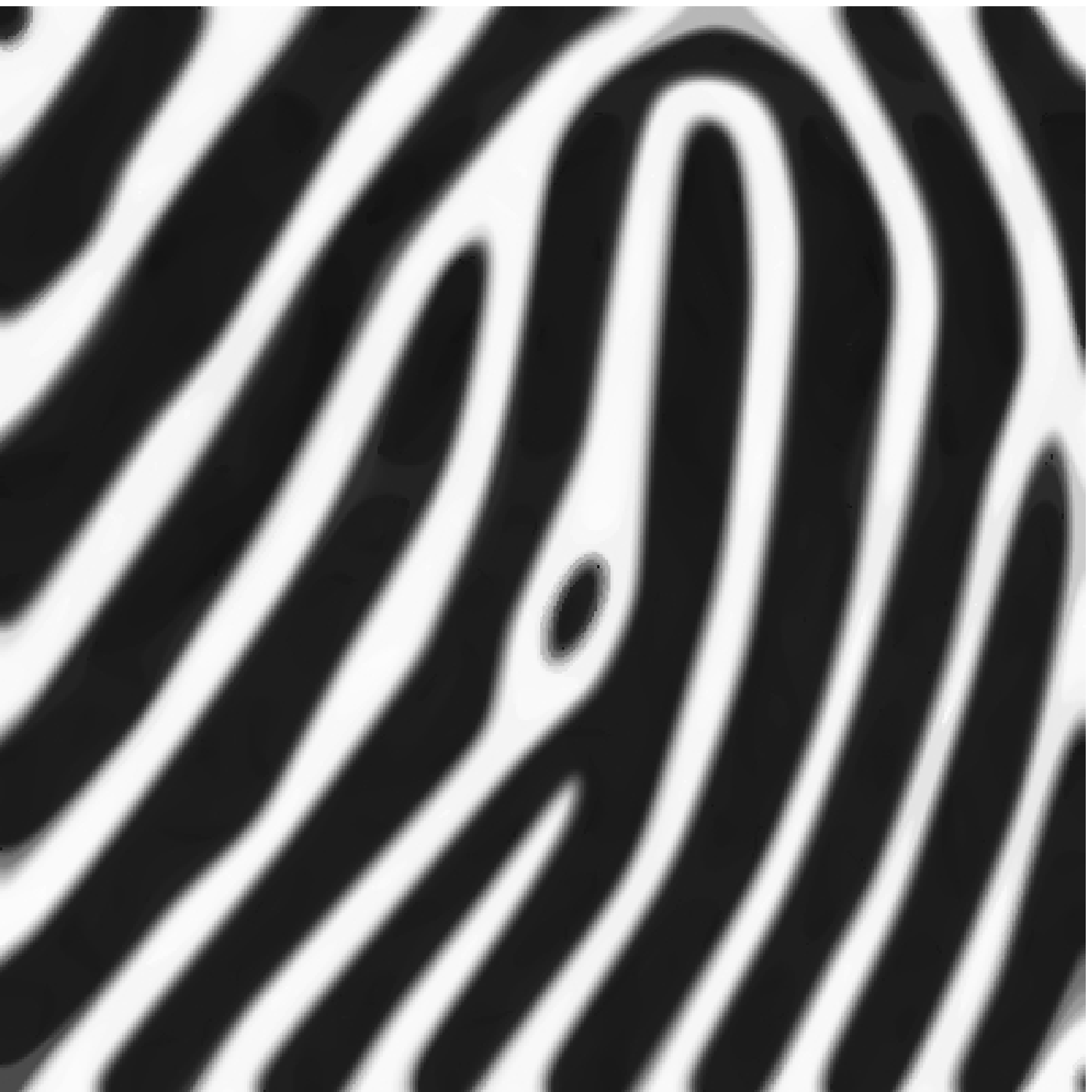}}
{\includegraphics[width=0.24\linewidth]{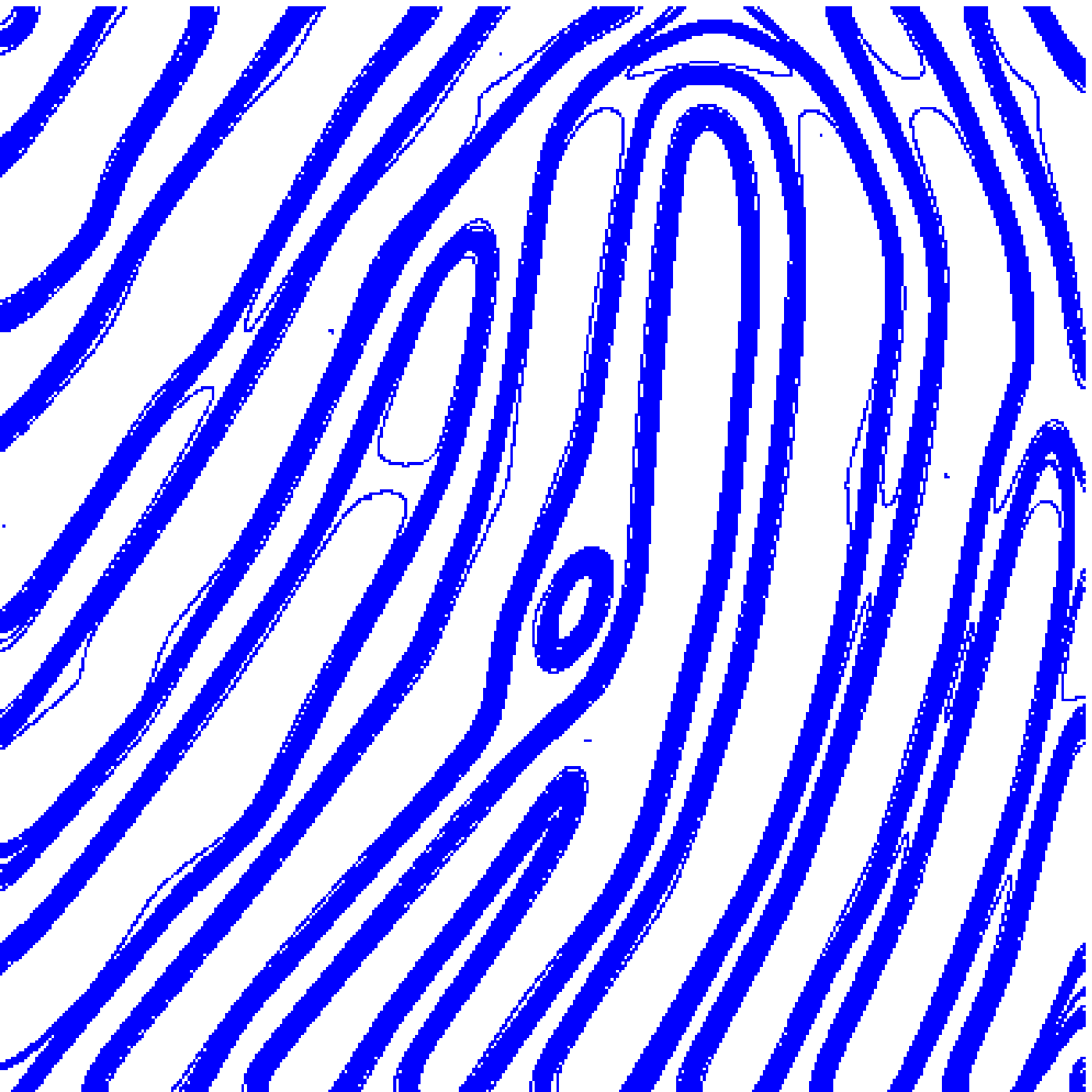}}
  \caption{\small{Original fingerprint and its level lines, Level Lines Affine Shortening of the image and the corresponding level lines.}}
  \label{FIG::fingerprint}
\end{figure}

%================================================================================
\textbf{Acknowledgments:} This research is partially financed by the MISS project of Centre National
d'Etudes Spatiales, the Office of Naval research under grant N00014-97-1-0839 and by the European Research Council, advanced grant ``Twelve labours''.

%================================================================================
\bibliographystyle{plain}

\end{document}